\newtheorem{thm}{Theorem}[section]
\newtheorem{lem}[thm]{Lemma}
\newtheorem{prop}[thm]{Proposition}
\theoremstyle{remark}
\newtheorem{rem}{Remark}[section]
\def\C{{\mathbb C}}
\def\H{{\mathbb H}}
\def\N{{\mathbb N}}
\def\R{{\mathbb R}}
\def\Z{{\mathbb Z}}
\begin{document}

\title[Beurling's theorem]
{Variations on a theorem of Beurling}
\author{Rahul Garg and Sundaram Thangavelu}

\address{Department of Mathematics\\ Indian Institute
of Science\\Bangalore-560 012}
\email{veluma@math.iisc.ernet.in, rahulgarg@math.iisc.ernet.in}

\date{\today}
\keywords{Beurling's condition, Bargmann transform, Hermite coefficients, 
Heisenberg group, entire vectors}
\subjclass{Primary: 42C15; Secondary: 42B35, 42C10, 42A56}
\thanks{}

\begin{abstract}
We consider functions satisfying the subcritical Beurling's condition, viz., 
$$ \int_{\R^n}\int_{\R^n} |f(x)| |\hat{f}(y)| e^{a |x \cdot y|} \, dx \, dy < \infty $$
for some $ 0 < a < 1.$ We show that such functions are entire vectors for the Schr\"{o}dinger representations of the Heisenberg group. If an eigenfunction $f$ of the Fourier transform satisfies the above condition we show that the Hermite coefficients of $ f $ have certain exponential decay which depends on $a$.
\end{abstract}

\maketitle

\section{Introduction}\label{intro}

For a function $f \in L^1(\R^n)$, define the Fourier transform of $f$ by
$$\hat{f}(y) = (2\pi)^{-n/2} \int_{\R^n} f(x) e^{-ix \cdot y} \, dx.$$
In 1991 L. H\"{o}rmander \cite{H} published a theorem on Fourier transform pairs which he attributed to A. Beurling. This result reads as follows: If an integrable function $ f $ on $\R$ satisfies
$$\int_{\R}\int_{\R} |f(x)| |\hat{f}(y)| e^{|xy|} \, dx \, dy < \infty$$
then it vanishes almost everywhere. Clearly, this result can be viewed as an uncertainty principle for the Fourier transform. As corollaries we can obtain the well known uncertainty principles of Hardy, Cowling-Price and Gelfand-Shilov (see \cite{Th2}). In 2003 Bonami et al \cite{BDJ} extended the above theorem of Beurling to higher dimensions and obtained a far reaching generalisation. They have proved the following result.

\begin{thm}[Bonami-Demange-Jaming] \label{BDJ} A function $f \in L^2(\R^n)$ 
satisfies the condition
$$ \int_{\mathbb{R}^n}\int_{\mathbb{R}^n} \frac{|f(x)| |\hat{f}(y)|}{(1+|x|+|y|)^{N}}
e^{|x \cdot y|} \, dx \, dy < \infty $$
for some $N \geq 0$ if and only if $ f(x) = P(x) e^{-(Ax,x)} $  where $A$ is a real positive definite symmetric matrix and $P$ is a polynomial with $\deg(P) < \frac{N-n}{2} $.
\end{thm}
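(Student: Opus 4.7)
The ``if'' direction is a direct computation. After diagonalizing $A$ and rescaling each coordinate we may assume $A = \tfrac{1}{2}I$, so $f(x) = P(x)e^{-|x|^2/2}$ and $\hat f(y) = Q(y)e^{-|y|^2/2}$ with $\deg Q = \deg P = d$. Using $|x\cdot y| \leq \tfrac{1}{2}(|x|^2+|y|^2)$, with equality only when $x = \pm y$, the exponential is absorbed by the Gaussians, leaving $\int\!\!\int \frac{|P(x)Q(y)|}{(1+|x|+|y|)^N}e^{-|x\mp y|^2/2}\,dx\,dy$ on the two halves $\pm x\cdot y\geq 0$. The change of variables $u = x\mp y$, $v = x\pm y$ combined with the estimate $|P(x)Q(y)| \leq C(1+|u|+|v|)^{2d}$ reduces convergence to integrability of $(1+|u|+|v|)^{2d-N}e^{-|u|^2/2}$ over $\R^n\times\R^n$, which holds precisely when $2d+n < N$, i.e., $d < (N-n)/2$.

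The substantive content is the converse. My plan is to first reduce to one dimension by slicing. For $\omega \in S^{n-1}$ let $R_\omega f(s) = \int_{x\cdot\omega = s} f$ be the Radon projection; by the projection-slice relation, $\widehat{R_\omega f}(\xi) = \hat f(\xi\omega)$. Integrating the hypothesis in polar coordinates in each of $x$ and $y$ and applying Fubini, for almost every pair of directions the ``tube averages'' of $f$ and $\hat f$ satisfy a one-dimensional subcritical Beurling condition with an adjusted polynomial weight. Granted the one-dimensional case, each such $R_\omega f$ has the form (polynomial)$\times$(Gaussian) with degree controlled by $N$; a Cram\'er--Wold/Radon inversion argument, together with the uniform degree bound in $\omega$, then forces $f(x) = P(x)e^{-(Ax,x)}$ for some real symmetric $A$, with $A$ positive definite because $f\in L^2(\R^n)$.

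For the one-dimensional core case I would invoke complex analysis. The integrability hypothesis, after averaging in each variable, produces a rich set of points where $|f|$ and $|\hat f|$ satisfy pointwise estimates of the form $|f(x)||\hat f(y)| \lesssim (1+|x|+|y|)^N e^{-|xy|}$. These allow one to continue the Fourier--Laplace transform $F(z) = \int f(x)e^{-ixz}\,dx$ to an entire function of order at most $2$, whose growth in the imaginary direction is matched by Gaussian decay on the real axis (and conversely for $\hat f$). Setting $g(z) = F(z)e^{z^2/2}$, one then controls $g$ on all of $\C$ via Phragm\'en--Lindel\"of applied in horizontal and vertical strips; Hadamard's factorization theorem identifies $g$ with a polynomial whose degree is constrained by the available growth, so that $f(x) = P(x)e^{-x^2/2}$ after normalization.

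The main obstacle will be the Phragm\'en--Lindel\"of/Hadamard step: one has to upgrade an \emph{integral} hypothesis into sharp pointwise growth information on $F$ throughout $\C$, and to track the degree of the polynomial $g$ with enough precision that, after undoing the dimensional reduction, the bound $\deg P < (N-n)/2$ emerges (and not a weaker one). By comparison, the sphere-averaging step that brings $n$ dimensions down to one, and the Radon inversion that brings the conclusion back up, should be routine once the one-dimensional statement is known with sharp control of the degree.
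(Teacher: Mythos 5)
This theorem is quoted in the paper from Bonami--Demange--Jaming and not proved there; the paper instead proves the equivalent subcritical statement (Theorem \ref{*}), whose proof shows what the essential mechanism has to be. Measured against that, your proposal has a genuine gap at its core. The hypothesis controls only the \emph{product} $|f(x)|\,|\hat f(y)|$, so the only complex--analytic object you can estimate is the product $Bf(z)\,B\hat f(z)$ (equivalently $F(z)G(z)$ with $G$ the transform of $\hat f$), never $F(z)$ alone. Your one-dimensional core assumes you can continue $F$ to an entire function of order $2$ ``whose growth in the imaginary direction is matched by Gaussian decay on the real axis,'' but no pointwise or integrated Gaussian decay of $f$ by itself follows from the hypothesis; that decay is part of the \emph{conclusion}. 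The actual proof applies Phragm\'en--Lindel\"of to $e^{-z^2/2}Bf(z)B\hat f(z)$ to show the product is a polynomial, and then needs a separate factorization theorem for entire functions of order $2$ with boundedly many zeros on every complex line (Lemma \ref{BDJ-factorization}) to split this into $Bf=Re^{S}$, followed by an argument that $S(z)+S(-iz)=0$ forces $S$ to be a homogeneous quadratic and a positivity argument to kill the imaginary part $B$ of the resulting quadratic form. None of this appears in your outline, and it is precisely the step where the theorem lives.

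Two further problems. First, the Radon-slicing reduction does not go through as stated: after the projection-slice identity the weight you inherit is $(1+|x\cdot\omega|+|\xi|)^{-N}$, which is \emph{larger} than the $(1+|x|+|\xi|)^{-N}$ the hypothesis provides, so the sliced integral is not dominated by the original one for $N>0$; and even granting that every projection $R_\omega f$ is a one-dimensional polynomial times a Gaussian, reassembling these (with $\omega$-dependent variances and polynomials) into $f(x)=P(x)e^{-(Ax,x)}$ is a nontrivial inverse problem, not a routine Cram\'er--Wold application. Second, your ``if'' direction only proves sufficiency of $\deg P<\frac{N-n}{2}$; the ``precisely'' requires a matching lower bound on the double integral, which forces you to isolate the leading homogeneous parts of $P$ and of the polynomial attached to $\hat f$ (this is exactly what Proposition \ref{deg} and Proposition \ref{prop-degree-BDJ} do), since $|P(x)Q(y)|$ can be small on large sets.
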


One motivation for introducing the polynomial factor $(1+|x|+|y|)^{N}$ in the denominator of the integrand in Theorem \ref{BDJ} is to capture the Gaussian $\varphi_0(x) = e^{-\frac{1}{2}|x|^2} $ which does not satisfy the Beurling's condition. However, the same Gaussian also satisfies 
\begin{eqnarray*}
K_a(f) = \int_{\R^n}\int_{\R^n} |f(x)| |\hat{f}(y)| e^{a |x \cdot y|} \, dx \, dy < \infty
\end{eqnarray*}
for any $ 0 < a < 1.$ Note that this amounts to replacing the factor $(1+|x|+|y|)^{-N}$ by $e^{-\delta |x \cdot y|}$ for some $\delta > 0$ in the hypothesis of Theorem \ref{BDJ}. It is therefore natural to consider functions satisfying $ K_a(f) \leq C K_a(Q\varphi_0)$ for all $0<a<1,$ for some polynomial $Q$. It turns out that this condition is actually equivalent to the hypothesis of Theorem \ref{BDJ}.

In view of this it is natural to ask for a characterisation of all functions $ f $ satisfying $ K_a(f) < \infty $ for a fixed $ 0 < a < 1.$ An analogue of this problem in the context of Hardy's theorem has been studied by Demange \cite{D}, Vemuri \cite{V} and the authors \cite{RT1}. Recall the statement of Hardy's theorem \cite{Ha}: If
$$ |f(x)| \leq C e^{-a|x|^2},~~~~ |\hat{f}(y)| \leq C e^{-b|y|^2} $$
then (i) $ f = 0 $ when $ ab > \frac{1}{4} ,$ (ii) $ f(x) = C e^{-a|x|^2 } $ when $ ab = \frac{1}{4} $ and (iii) when $ ab < \frac{1}{4} $ there are infinitely many linearly independent functions (e.g. suitable dilates of  Hermite functions)  satisfying the hypotheses. In \cite{V} Vemuri showed that on $ \R $ the Hermite coefficients of any function $ f $ satisfying Hardy's conditions with $ a = b = \frac{1}{2}\tanh(2t) < \frac{1}{2}$ have the decay $ |(f,h_k)| \leq C (2k+1)^{-1/4}e^{-(2k+1)t/2}$ for all $k \in \N = \{ 0, 1, 2, \ldots \}.$ Here $ h_k $ are the Hermite functions on $ \R.$

It has been conjectured that a similar result is also true in higher dimensions but so far only some partial results have been proved, see \cite{RT1}. In a similar fashion we can ask if the Hermite coefficients of a function satisfying the subcritical Beurling's condition $ K_a(f) < \infty $ will have a certain prescribed exponential decay. It turns out that an exact analogue of Vemuri's theorem fails in this case. One of the aims of this article is to demonstrate this phenomenon. We still do not know if for a function $f$ satisfying Beurling's condition for a fixed $0<a<1,$ there exists some $\epsilon>0$ (depending on both $f$ and $a$) such that $ |(f, \Phi_\alpha)| \leq C_\epsilon e^{-(2|\alpha|+n)\epsilon} $ for all $ \alpha \in \N^n,$ where $\Phi_\alpha$ are the Hermite functions on $\mathbb{R}^n.$ However, for eigenfunctions of the Fourier transform we do obtain exponential decay (depending only on $a$) for the  Hermite coefficients. Also it turns out that any function (not necessarily an eigenfunction of the Fourier transform) satisfying the subcritical Beurling's condition is an entire vector for the Schr\"odinger representation of the Heisenberg group $\H^n$.


\section{Beurling's condition and Hermite coefficients}

As mentioned in the introduction, we start with the following result which we will show later to be equivalent to Theorem \ref{BDJ}.

\begin{thm} \label{*}
Let $f \in L^2(\R^n).$ Then $ K_a(f) \leq C K_a(Q \varphi_0)$
for some polynomial $Q$ and for all $0<a<1$ if and only if $f$ can be written as
$f(x) = P(x) e^{-(Ax , x)}$ for some positive definite matrix $A$ and a polynomial $P$ 
with $\deg(P) \leq \deg(Q)$.
\end{thm}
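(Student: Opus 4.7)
The strategy is to reduce the ``only if'' direction to Theorem \ref{BDJ} via an integration-in-$a$ trick, and to handle the ``if'' direction by a direct Gaussian computation. Both directions rest on the same preliminary estimate: a polynomial-in-$(1-a)^{-1}$ asymptotic bound for $K_a(Q\varphi_0)$ as $a\to 1$.

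Step 1 (reference asymptotic). Since $\widehat{Q\varphi_0}(y) = R(y)\varphi_0(y)$ with $R$ a polynomial of degree $\deg Q$, I would bound $|Q(x)||R(y)| \leq C(1+|x|)^{\deg Q}(1+|y|)^{\deg Q}$, split $e^{a|x\cdot y|} \leq e^{ax\cdot y}+e^{-ax\cdot y}$ and complete the square $\tfrac12|x|^2+\tfrac12|y|^2-ax\cdot y = \tfrac12|x-ay|^2 + \tfrac{1-a^2}{2}|y|^2$. After the substitution $u=x-ay$ and scaling $y\mapsto y/\sqrt{1-a^2}$ one obtains
$$K_a(Q\varphi_0) \leq C(1-a^2)^{-\deg Q - n/2} \leq C(1-a)^{-M},\qquad M := \deg Q + n/2.$$
The same calculation, with $A$ diagonalised by orthogonal change of variables (so $\widehat{Pe^{-(A\cdot,\cdot)}}(y) = P^{\ast}(y)e^{-(A^{-1}y,y)/4}$ with $\deg P^{\ast} = \deg P$) and the inequality $(Ax,x)(A^{-1}y,y) \geq (x\cdot y)^2$, gives $K_a(Pe^{-(Ax,x)}) \leq C_f(1-a)^{-\deg P - n/2}$. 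In the ``if'' direction this, combined with the analogous lower bound $K_a(Q\varphi_0) \gtrsim (1-a)^{-\deg Q - n/2}$ (obtained by restricting to a conic region avoiding the zeros of $QR$) and continuity at $a=0$, yields $K_a(f) \leq CK_a(Q\varphi_0)$ uniformly in $a\in[0,1)$ as soon as $\deg P \leq \deg Q$.

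Step 2 (``only if''). With $M = \deg Q + n/2$ the hypothesis now reads $K_a(f) \leq C(1-a)^{-M}$ for every $a\in(0,1)$. Fix $\epsilon>0$, multiply by $(1-a)^{M-1+\epsilon}$ and integrate in $a$; the right side is $\int_0^1 C(1-a)^{\epsilon-1}\,da < \infty$, and by Fubini the left side equals
$$\int_{\R^n}\int_{\R^n} |f(x)||\hat f(y)|\,e^{|x\cdot y|}\,G(|x\cdot y|)\,dx\,dy,$$
where, writing $e^{a|x\cdot y|} = e^{|x\cdot y|}e^{-(1-a)|x\cdot y|}$ and substituting $s=(1-a)|x\cdot y|$, one recognises $G(T) = e^{-T}\int_0^1 s^{M-1+\epsilon}e^{-sT}\,ds \asymp (1+T)^{-M-\epsilon}$. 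Hence
$$\int\!\!\int \frac{|f(x)||\hat f(y)|\,e^{|x\cdot y|}}{(1+|x\cdot y|)^{M+\epsilon}}\,dx\,dy < \infty.$$
Because $(1+|x|+|y|)^2 \geq 1 + 2|x||y| \geq 1 + 2|x\cdot y|$, this is the hypothesis of Theorem~\ref{BDJ} with $N = 2M+2\epsilon = 2\deg Q + n + 2\epsilon$. Theorem~\ref{BDJ} then gives $f(x) = P(x)e^{-(Ax,x)}$ with $\deg P < (N-n)/2 = \deg Q + \epsilon$, and since $\deg P \in \N$ and $\epsilon>0$ is arbitrary, $\deg P \leq \deg Q$.

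The main obstacle is the calibration of exponents in Step 2: the precise value $M = \deg Q + n/2$ from Step 1 must match $(N-n)/2$ in Theorem~\ref{BDJ} so that the final degree bound is exactly $\deg P \leq \deg Q$ (not merely $\leq \deg Q + c(n)$). A secondary technical point is the loss from the inequality $(1+|x\cdot y|) \lesssim (1+|x|+|y|)^2$, which doubles the BDJ exponent $N$; this is exactly compensated by the factor $1/2$ in $(N-n)/2$, which is why the bookkeeping works out sharply. The remaining computations (the Gaussian integrals producing the $(1-a^2)^{-\deg Q - n/2}$ scaling, and the Gamma-integral asymptotics of $G(T)$) are routine.
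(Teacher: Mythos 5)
Your proof is correct, but the ``only if'' direction takes a genuinely different route from the paper. The paper does not invoke Theorem \ref{BDJ} at all in proving this theorem: starting from $K_a(f)\leq \tilde C(1-a)^{-(n+2k)/2}$ it re-runs the whole complex-analytic machinery --- pointwise bounds on $Bf(z)B\hat f(z)$ via the Bargmann transform with an optimised choice $a=a(|x|)$, the Phragm\'en--Lindel\"of lemma (Lemma \ref{Phrag-Lind}) to show $Bf\cdot B\hat f$ is a polynomial, the factorisation lemma (Lemma \ref{BDJ-factorization}) to get $Bf=Re^{S}$, and finally an eigenvalue argument to kill the imaginary part of the quadratic form. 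You instead treat Theorem \ref{BDJ} as a black box and reduce to it by averaging: multiplying $K_a(f)\leq C(1-a)^{-M}$ by $(1-a)^{M-1+\epsilon}$ and integrating in $a$ converts the one-parameter family of bounds into a single weighted Beurling integral with weight $(1+|x\cdot y|)^{-M-\epsilon}$, hence $(1+|x|+|y|)^{-2M-2\epsilon}$. This is essentially a sharpened form of the paper's own Lemma \ref{****}, which proves the same implication by a dyadic decomposition in $|\xi\cdot\eta|$ but loses an additive $3$ in the exponent ($2M+3$ instead of your $2M+2\epsilon$); that loss is exactly why the paper cannot use Lemma \ref{****} plus Theorem \ref{BDJ} to conclude $\deg P\leq\deg Q$ (it would only give $\deg P\leq\deg Q+1$), and your $\epsilon$-trick together with the integrality of $\deg P$ is the observation that repairs this. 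What you give up is self-containedness (your argument inherits the full strength of Bonami--Demange--Jaming, whose proof uses the same complex-analytic tools the paper deploys directly); what you gain is a much shorter proof and a sharper version of Lemma \ref{****}. The ``if'' direction and the calibration $K_a(Q\varphi_0)\asymp(1-a^2)^{-(n+2\deg Q)/2}$ coincide with the paper's Proposition \ref{deg}, though your lower bound is only sketched (the paper's careful splitting into homogeneous leading parts and a Taylor expansion is what makes ``restrict to a cone avoiding the zeros'' rigorous). One small slip: as written, $G(T)=e^{-T}\int_0^1 s^{M-1+\epsilon}e^{-sT}\,ds$ is not the function appearing after Fubini; the inner integral $\int_0^1(1-a)^{M-1+\epsilon}e^{aT}\,da$ equals $e^{T}\cdot T^{-(M+\epsilon)}\int_0^{T}s^{M-1+\epsilon}e^{-s}\,ds$, so the correct $G(T)$ is $T^{-(M+\epsilon)}\int_0^T s^{M-1+\epsilon}e^{-s}\,ds\asymp(1+T)^{-M-\epsilon}$; your displayed conclusion is the right one, so this is only a matter of bookkeeping.
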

First we prove the following proposition which is about the bound on the degree of $P$.
%
%
\begin{prop} \label{deg}
Let $R$ and $S$ be polynomials of degree $m_1$ and $m_2$ respectively. If we write
$$E(R,S,a) = \int_{\R^n} \int_{\R^n} |R(x)| |S(y)| e^{-\frac{1}{2}|x|^2} e^{-\frac{1}{2}|y|^2} e^{a |x \cdot y|} \, dx \, dy,$$
then there exist positive constants $C_{R, S}^1$ and $C_{R, S}^2$ such that
$$ C_{R, S}^1 (1-a^2)^{-(n+m_1+m_2)/2} \leq E(R,S,a) \leq C_{R, S}^2 (1-a^2)^{-(n+m_1+m_2)/2}.$$
\end{prop}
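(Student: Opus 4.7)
We prove the two bounds separately; the upper bound is a routine Gaussian computation, while the lower bound requires a geometric argument to exploit both polynomials simultaneously.

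For the \emph{upper bound}, the pointwise inequality $e^{a|x\cdot y|}\le e^{ax\cdot y}+e^{-ax\cdot y}$ combined with the substitution $y\mapsto -y$ (which only replaces $|S(y)|$ by $|S(-y)|$, a polynomial of the same degree) reduces the problem to estimating
\[
\widetilde E(R,S,a) = \int_{\R^n}\int_{\R^n}|R(x)||S(y)|\,e^{-|x|^2/2-|y|^2/2+ax\cdot y}\,dx\,dy.
\]
Completing the square as $-\tfrac12|x|^2-\tfrac12|y|^2+ax\cdot y = -\tfrac12|x-ay|^2-\tfrac{1-a^2}{2}|y|^2$ and setting $u=x-ay$ yields
\[
\widetilde E(R,S,a) = \int_{\R^n}|S(y)|\,e^{-(1-a^2)|y|^2/2}\left(\int_{\R^n}|R(u+ay)|e^{-|u|^2/2}\,du\right)dy.
\]
The trivial bound $|R(u+ay)|\le C_R(1+|u|)^{m_1}(1+|y|)^{m_1}$, valid uniformly for $a\le 1$, makes the inner integral at most a constant times $(1+|y|)^{m_1}$. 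Using $|S(y)|\le C_S(1+|y|)^{m_2}$ and the dilation $y=(1-a^2)^{-1/2}z$ reduces the outer integral to a standard Gaussian moment, giving $\widetilde E\le C(1-a^2)^{-(n+m_1+m_2)/2}$.

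For the \emph{lower bound}, note first that $E(R,S,a)\ge \widetilde E(R,S,a)$ (since $|x\cdot y|\ge x\cdot y$). The leading homogeneous parts $R_{m_1}$ and $S_{m_2}$ are nonzero, so the union of their zero sets on $S^{n-1}$ is a proper real-algebraic subvariety of measure zero; hence one can pick $\omega_0\in S^{n-1}$ and a small spherical cap $U\ni\omega_0$ on which $|R_{m_1}|, |S_{m_2}|\ge c_0>0$. Let $\Gamma$ be the open cone over $U$. For $x,y\in\Gamma$ with $|x|,|y|\ge M$ sufficiently large the leading terms dominate, so $|R(x)|\ge c|x|^{m_1}$ and $|S(y)|\ge c|y|^{m_2}$. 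In the completed-square form, restrict the $y$-integration to $\{y\in\Gamma,|y|\ge M\}$ and the $u$-integration to $|u|\le\varepsilon|y|$ with $\varepsilon$ small enough that $x=u+ay$ still lies in $\Gamma$ with $|x|\asymp|y|$ (we may restrict to $a\in[\tfrac12,1)$, since the bound is trivial when $a$ is bounded away from $1$). The inner integral is then bounded below by $c|y|^{m_1}$ times a positive Gaussian mass, so
\[
\widetilde E(R,S,a)\gtrsim \int_{y\in\Gamma,|y|\ge M}|y|^{m_1+m_2}\,e^{-(1-a^2)|y|^2/2}\,dy,
\]
and the dilation $y=(1-a^2)^{-1/2}z$ produces the factor $(1-a^2)^{-(n+m_1+m_2)/2}$; the residual $z$-integral over $\{z\in\Gamma, |z|\ge(1-a^2)^{1/2}M\}$ is bounded below by the same integral over $\{z\in\Gamma,|z|\ge M\}$, a positive constant independent of $a$.

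The main obstacle lies in the lower bound: because $|R|$ and $|S|$ can vanish, one cannot simply bound them below by $|x|^{m_1}$ and $|y|^{m_2}$ globally. The fix is to work on a common spherical cap where both leading homogeneous parts are nonvanishing, and then to propagate the lower bound on $|R|$ from $ay$ to $x=u+ay$ via the perturbative constraint $|u|\le\varepsilon|y|$. This is what guarantees that the restriction is compatible with completing the square and gives the sharp exponent $(n+m_1+m_2)/2$.
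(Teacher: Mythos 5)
Your argument is correct, but it takes a genuinely different route from the paper's, most substantially in the lower bound. For the upper bound the paper bounds $e^{a|x\cdot y|}$ by $\prod_{j=1}^n e^{a|x_jy_j|}$, tensorizes into one-dimensional monomial integrals, and evaluates each by a binomial expansion into explicit Gamma-function expressions; your single $n$-dimensional completion of the square together with the crude bound $|R(u+ay)|\le C_R(1+|u|)^{m_1}(1+|y|)^{m_1}$ reaches the same exponent with less computation. For the lower bound the paper splits $R=R_1+R_2$ and $S=S_1+S_2$ into leading homogeneous parts plus lower-order remainders, uses $E(R,S,a)\ge E(R_1,S_1,a)-\bigl(E(R_1,S_2,a)+E(R_2,S_1,a)+E(R_2,S_2,a)\bigr)$ so that the cross terms carry a smaller power of $(1-a^2)^{-1}$, and then analyzes $E(R_1,S_1,a)$ by a change of variables followed by a Taylor expansion of $S_1(y+x)$ about $y=0$, checking that the remainder is dominated for $a$ near $1$; your approach instead localizes to a cone over a spherical cap where both leading forms are bounded below and restricts the shifted variable to $|u|\le\varepsilon|y|$, which sidesteps the Taylor-remainder bookkeeping and exploits positivity directly. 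The one step you should make explicit is the two-cap argument: to guarantee $x=u+ay\in\Gamma$ you should integrate $y$ over a strictly smaller subcone $\Gamma'\subset\Gamma$ and enlarge the threshold on $|y|$ so that $|x|\ge M$ as well; this costs nothing because the set where $R_{m_1}$ and $S_{m_2}$ are both nonvanishing is open in $S^{n-1}$. Both proofs dispose of the range where $a$ is bounded away from $1$ by the same trivial monotonicity observation.
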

\begin{proof}
We first look at the monomials in 1-dimension. Notice that
\begin{eqnarray} \label{monomial}
&& \int_\R \int_\R |x|^j |y|^k e^{-\frac{1}{2}x^2} e^{-\frac{1}{2}y^2} e^{axy} \, dx \, dy \\
\nonumber &=& \int_\R \int_\R |x|^j |y+ax|^k e^{-\frac{1}{2}(1-a^2)x^2} e^{-\frac{1}{2}y^2} \, dx \, dy
\end{eqnarray}
which is bounded above by
\begin{eqnarray*}
&& \sum_{l=0}^k {k \choose l} a^l \int_\R |x|^{j+l} e^{-\frac{1}{2}(1-a^2)x^2} \, dx \int_\R |y|^{k-l}  e^{-\frac{1}{2}y^2} \, dy \\
&=& 4 \sum_{l=0}^k {k \choose l} a^l \int_0^\infty r^{j+l} e^{-\frac{1}{2}(1-a^2)r^2} \, dr \int_0^\infty s^{k-l}  e^{-\frac{1}{2}s^2} \, ds
\end{eqnarray*}
which equals to
$$ 2^{(j+k+2)/2} \sum_{l=0}^{k} \left \{ {k \choose l} \Gamma \left( \frac{j+l+1}{2} \right) \Gamma \left( \frac{k-l+1}{2} \right) \right \} a^{l} (1-a^2)^{-(j+l+1)/2}.$$
The above estimate is the product of $(1-a^2)^{-(j+k+1)/2}$ with
$$ 2^{(j+k+2)/2} \sum_{l=0}^{k} \left \{ {k \choose l} \Gamma \left( \frac{j+l+1}{2} \right) \Gamma \left( \frac{k-l+1}{2} \right) \right \} a^{l} (1-a^2)^{(k-l)/2}.$$
Clearly, the last term is bounded above by a constant which is uniform in $a$ for $0<a<1$. Notice that if we replace $a$ by $-a$ in the integral in (\ref{monomial}), it does not change the value of the integral as the sign is taken care by making a change of variable in either $x$ or $y$-variable. Thus we see that there exists a positive constant, say $c_{j,k}$ such that
\begin{eqnarray} \label{monomials-upper-bound} \int_\R \int_\R |x|^j |y|^k e^{-\frac{1}{2}x^2} e^{-\frac{1}{2}y^2} e^{a|xy|} \, dx \, dy ~\leq~ c_{j,k} (1-a^2)^{-(j+k+1)/2}.
\end{eqnarray}
Let $R(x) = \sum_{|\alpha| \leq m_1} a_\alpha x^\alpha$ and $S(y) = \sum_{|\beta| \leq m_2} b_\beta y^\beta.$ Then $E(R,S,a)$ is bounded above by
\begin{eqnarray*}
&& \sum_{|\alpha| \leq m_1} \sum_{|\beta| \leq m_2} |a_\alpha| \, |b_\beta| \prod_{j=1}^n \int_\R \int_\R |x_j|^{\alpha_j} |y_j|^{\beta_j} e^{-\frac{1}{2}x_j^2} e^{-\frac{1}{2}y_j^2} e^{a|x_j y_j|} \, dx_j \, dy_j \\
&\leq& \sum_{|\alpha| \leq m_1} \sum_{|\beta| \leq m_2} |a_\alpha| \, |b_\beta| \prod_{j=1}^n c_{\alpha_j,\beta_j} (1-a^2)^{-(1+\alpha_j+\beta_j)/2} \\
&\leq& \left(\sum_{|\alpha| \leq m_1} \sum_{|\beta| \leq m_2} |a_\alpha| \, |b_\beta| \prod_{j=1}^n c_{\alpha_j,\beta_j} \right) (1-a^2)^{-(n+m_1+m_2)/2} \\
&=& C_{R, S} (1-a^2)^{-(n+m_1+m_2)/2}
\end{eqnarray*}
where the last inequality is true for all $0<a<1$. This proves our claim on the upper bound of $E(R,S,a)$. Next we prove the claim for the lower bound of $E(R,S,a)$. For this write $R = R_1 + R_2$ where $R_1$ is the homogeneous part of $R$ of leading degree $m_1$. Similarly, write $S = S_1 + S_2$. Clearly, $\deg(R_2) < m_1$ and $\deg(S_2) < m_2$. Now $E(R,S,a)$ is bounded from below by
\begin{eqnarray*}
E(R_1, S_1, a) - \left( E(R_1, S_2, a) + E(R_2, S_1, a) + E(R_2, S_2, a)\right).
\end{eqnarray*}
In view of the upper bound we have already got, there exists a constant $\tilde{C}_{R,S}>0$ such that for all $0<a<1$,
\begin{eqnarray} \label{lower-bound-eq1}
E(R_1, S_2, a) + E(R_2, S_1, a) + E(R_2, S_2, a) \\
\nonumber \leq \tilde{C}_{R,S} (1-a^2)^{-(n+d)/2}
\end{eqnarray}
where $d = \max\{m_1 + \deg(S_2), m_2 + \deg(R_2)\} < m_1 +m_2$. Now we consider $E(R_1, S_1, a)$ which is bounded from below by
$$ \int_{\R^n} \int_{\R^n} |R_1(x)| |S_1(y)| e^{-\frac{1}{2}|x|^2} e^{-\frac{1}{2}|y|^2} e^{a x \cdot y} \, dx \, dy $$
which is equal to
\begin{eqnarray*}
&& \int_{\R^n} \int_{\R^n} |R_1(x)| |S_1(y)| e^{-\frac{1}{2}(1-a^2)|x|^2} e^{-\frac{1}{2}|y-ax|^2} \, dx \, dy \\
&=& a^{-n} \int_{\R^n} \int_{\R^n} |R_1(a^{-1}x)| |S_1(y)| e^{-\frac{1}{2}\left(\frac{1-a^2}{a^2}\right)|x|^2} e^{-\frac{1}{2}|y-x|^2} \, dx \, dy \\
&=& a^{-(n+m_1)} \int_{\R^n} \int_{\R^n} |R_1(x)| |S_1(y+x)| e^{-\frac{1}{2}\left(\frac{1-a^2}{a^2}\right)|x|^2} e^{-\frac{1}{2}|y|^2} \, dx \, dy.
\end{eqnarray*}
Consider the Taylor expansion of $S_1(y+x)$ in $y$-variable about $y=0$,
$$S_1(y+x) = S_1(x) + \sum_{0<|\alpha|\leq m_2} \frac{S_1^{(\alpha)}(x)}{\alpha!} y^\alpha.$$
Therefore, from the above estimate we get
$$E(R_1, S_1, a) \geq a^{-(n+m_1)} \left(I_1(R_1, S_1, a) - I_2(R_1, S_1, a) \right)$$
where
$$ I_1(R_1, S_1, a) = \int_{\R^n} \int_{\R^n} |R_1(x)| |S_1(x)| e^{-\frac{1}{2} \left(\frac{1-a^2}{a^2}\right)|x|^2} e^{-\frac{1}{2}|y|^2} \, dx \, dy $$
while $I_2(R_1, S_1, a)$ is equal to
$$\sum_{0<|\alpha|\leq m_2} \frac{1}{\alpha!} \int_{\R^n} \int_{\R^n} \left|R_1(x)\right| \left|S_1^{(\alpha)}(x)\right| \left|y^\alpha\right| e^{-\frac{1}{2} \left(\frac{1-a^2}{a^2}\right)|x|^2} e^{-\frac{1}{2}|y|^2} \, dx \, dy.$$
Making a change of variable in $x$ and using the fact that $R_1$ and $S_1$ are homogeneous of degree $m_1$ and $m_2$ respectively, we get
\begin{eqnarray*}
I_1(R_1, S_1, a) &=& C'_{R_1,S_1} a^{n+m_1+m_2} (1-a^2)^{-(n+m_1+m_2)/2} ~, \\
I_2(R_1, S_1, a) &=& \left( \sum_{0<|\alpha| \leq m_2}  C''_{R_1, S_1, \alpha} (1-a^2)^{|\alpha|/2} a^{-|\alpha|} \right) \\
&& \times a^{n+m_1+m_2} (1-a^2)^{-(n+m_1+m_2)/2}
\end{eqnarray*}
where
\begin{eqnarray*}
C'_{R_1,S_1} &=& \int_{\R^n} \int_{\R^n} |R_1(x)| |S_1(x)| e^{-\frac{1}{2}|x|^2} e^{-\frac{1}{2}|y|^2} \, dx \, dy, \\
C''_{R_1, S_1, \alpha} &=& \frac{1}{\alpha!} \int_{\R^n} \int_{\R^n} |R_1(x)| |S_1^{(\alpha)}(x)| |y^\alpha| e^{-\frac{1}{2}|x|^2} e^{-\frac{1}{2}|y|^2} \, dx \, dy.
\end{eqnarray*}
Choose $\delta > 0$ (small enough) so that
$$ \sum_{0<|\alpha| \leq m_2} C''_{R_1, S_1, \alpha} (1-a^2)^{|\alpha|/2} a^{-|\alpha|} < C'_{R_1,S_1}$$
for all $1-\delta < a <1$. As a consequence of this we get a positive constant, say $C_{R_1,S_1,\delta}$ such that
$$ E(R_1, S_1, a) \geq C_{R_1,S_1,\delta} (1-a^2)^{-(n+m_1+m_2)/2} $$
for all $1-\delta < a <1$. The above estimate together with (\ref{lower-bound-eq1}) and the fact that $d< m_1+m_2$ implies that there exists some $C_{R, S, \delta}>0$ such that
$$ E(R, S, a) \geq C_{R, S, \delta} (1-a^2)^{-(n+m_1+m_2)/2} $$
for all $1-\delta < a <1$ (for a reduced $\delta>0$, if necessary). For $0 < a \leq 1-\delta$, the above inequality holds easily because in this range of $a$, 
$$ E(R, S, a) \geq \left( E(R,S,0) \left(1-(1-\delta)^2 \right)^{(n+m_1+m_2)/2} \right) (1-a^2)^{-(n+m_1+m_2)/2}.$$
This completes the claim for lower bound on $E(R, S, a)$.
\end{proof}
%
%
A similar proposition about the bound on the degree of the polynomial $P$ in Theorem \ref{BDJ} was proved in \cite{BDJ} (Proposition 2.1, \cite{BDJ}). For the convenience of the 
reader, we state it here:
\begin{prop} \label{prop-degree-BDJ}
Let $f(x) = P(x) e^{-\frac{1}{2}((A+iB)x, x)}$ with $A$ and $B$ two real symmetric matrices and $P$ a polynomial. If $ f \in L^2(\R^n),$ then $A$ is positive definite. 
Moreover, the following are equivalent:
\begin{flalign*}
&(i)~~ \int_{\R^n} \int_{\R^n} \frac{|f(\xi)| |\hat{f}(\eta)|}{(1+ |\xi| + |\eta|)^N} e^{|\xi \cdot \eta|} \, d\xi \, d\eta < \infty~ ;& \\
&(ii)~~ B=0~~ \mbox{and} ~~ \deg(P) < \frac{N-n}{2}.&
\end{flalign*}
\end{prop}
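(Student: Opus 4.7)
The plan is to follow the strategy underlying Proposition~\ref{deg}: compute $\hat f$ explicitly, identify the real quadratic form appearing in $|\hat f|$, and then reduce the convergence question to a Gaussian calculation with polynomial weight, exactly in the spirit of that proposition.

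First, $f\in L^{2}(\R^{n})$ forces $A$ to be positive definite, since otherwise $|f|^{2}=|P|^{2}e^{-(Ax,x)}$ fails to be integrable along some direction. Completing the complex square in the standard Gaussian Fourier transform formula gives $\hat f(y)=Q(y)\,e^{-\tfrac12((A+iB)^{-1}y,y)}$ with $\deg Q=\deg P$. Writing $C:=A^{-1/2}BA^{-1/2}$ (a real symmetric matrix), one computes
$$\operatorname{Re}\bigl((A+iB)^{-1}\bigr)=A^{-1/2}(I+C^{2})^{-1}A^{-1/2}=:A',$$
so $A'\le A^{-1}$ in the sense of symmetric matrices, with equality if and only if $B=0$. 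Hence $|\hat f(y)|=|Q(y)|\,e^{-\tfrac12(A'y,y)}$.

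For $(ii)\Rightarrow(i)$, assume $B=0$ and $\deg P<(N-n)/2$. A linear change of variables normalizes $A$ to $\tfrac12 I$ and replaces $P,Q$ by polynomials $R,S$ of degree $m=\deg P$; the remaining integral
$$\int_{\R^{n}}\int_{\R^{n}}|R(x)||S(y)|\,e^{-\tfrac12|x|^{2}-\tfrac12|y|^{2}}\,\frac{e^{|x\cdot y|}}{(1+|x|+|y|)^{N}}\,dx\,dy$$
mirrors the one in Proposition~\ref{deg} with $e^{-\delta|x\cdot y|}$ replaced by the polynomial weight $(1+|x|+|y|)^{-N}$. Splitting the double space into orthants on which $|x\cdot y|=\pm x\cdot y$ and applying the substitution $y\mapsto y+x$ absorbs the exponential factor (just as in Proposition~\ref{deg}); the $y$-integral is a harmless Gaussian, and the remaining $x$-integral is comparable to $\int_{\R^{n}}|x|^{2m}(1+|x|)^{-N}\,dx$, which converges if and only if $2m<N-n$.

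For $(i)\Rightarrow(ii)$, suppose the weighted integral is finite. If $B\ne 0$, the strict inequality $A'<A^{-1}$ yields a unit vector $\eta_{0}$ with $(A'\eta_{0},\eta_{0})<(A^{-1}\eta_{0},\eta_{0})$. Setting $\xi_{0}:=A^{-1}\eta_{0}/|A^{-1}\eta_{0}|$, a direct check gives $(A\xi_{0},\xi_{0})(A^{-1}\eta_{0},\eta_{0})=(\xi_{0}\cdot\eta_{0})^{2}$, and hence $(A\xi_{0},\xi_{0})(A'\eta_{0},\eta_{0})<(\xi_{0}\cdot\eta_{0})^{2}$. On a narrow conic neighborhood of $(t\xi_{0},s\eta_{0})$ with $t,s>0$, the exponent $-\tfrac12 t^{2}(A\xi,\xi)-\tfrac12 s^{2}(A'\eta,\eta)+ts|\xi\cdot\eta|$ is an indefinite quadratic form in $(t,s)$, so along some ray $s=\lambda_{0}t$ the integrand grows like $e^{ct^{2}}$ with $c>0$; the polynomial weight $(1+|x|+|y|)^{-N}$ cannot suppress this, contradicting $(i)$. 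Thus $B=0$, and the degree bound then follows from the lower-bound counterpart of the computation above (the analogue, with polynomial weight, of the lower-bound half of Proposition~\ref{deg}). The main obstacle is this conic-localization argument: one must choose the cone so that neither $P$ nor $Q$ vanishes identically on it (possible since the leading homogeneous parts of $P$ and $Q$ are nonzero on open dense subsets of the sphere) and then transfer the two-dimensional $(t,s)$-divergence into genuine divergence of the full $2n$-dimensional integral via the angular Jacobian.
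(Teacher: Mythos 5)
You should first be aware that the paper itself offers no proof of this proposition: it is imported verbatim from Bonami--Demange--Jaming (Proposition 2.1 of \cite{BDJ}) ``for the convenience of the reader,'' so there is nothing in the paper to compare your argument against line by line. That said, your reconstruction is essentially the standard proof and is sound in outline. The identity $\operatorname{Re}\bigl((A+iB)^{-1}\bigr)=A^{-1/2}(I+C^{2})^{-1}A^{-1/2}$ with $C=A^{-1/2}BA^{-1/2}$ is correct; the reduction of (ii)$\Rightarrow$(i) to the convergence of $\int_{\R^n}|x|^{2m}(1+|x|)^{-N}\,dx$ after completing the square is the same Gaussian bookkeeping used in Proposition \ref{deg}; and your Cauchy--Schwarz/conic-localization step for ruling out $B\neq 0$ (the equality case $(A\xi_{0},\xi_{0})(A^{-1}\eta_{0},\eta_{0})=(\xi_{0}\cdot\eta_{0})^{2}$ combined with strict inequality of $A'$ against $A^{-1}$ in the direction $\eta_{0}$) is exactly the quantitative version of the eigenvector test for the nonnegativity of $(\xi,\xi)+((I+B^{2})^{-1}\eta,\eta)-2(\xi,\eta)$ that the authors themselves run at the end of the proof of Theorem \ref{*}. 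Three routine points still need writing out: the statement tacitly assumes $P\not\equiv 0$, since otherwise (i) holds with arbitrary $B$; the zero-eigenvalue case of the positive-definiteness claim is better handled by Fubini than by ``growth along a direction,'' because $e^{-(Ax,x)}$ need not grow there, only fail to decay; and the divergence half of the degree bound requires the same lower-bound care near the diagonal $|\xi-\eta|\leq 1$ that you already flag for the cone (perturb the base direction into the open set where the leading homogeneous parts of $P$ and $Q$ are nonvanishing; the strict quadratic inequality is an open condition, so it survives the perturbation). None of these is a gap in the idea; they are the verifications one would expect to see in a full write-up.
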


We are now in a position to prove Theorem \ref{*}.\\

{\bf Proof of Theorem \ref{*}}.
Let $f(x) = P(x) e^{-(Ax, x)}$ for some positive definite matrix $A$ and a polynomial $P$ with $\deg(P) = k$. By definition of $K_a(f),$ it immediately follows that $K_a(f) = K_a(f_M)$ where $f_M(x) = |\det M|^{1/2} f(Mx),$ $M$ being an invertible matrix. Therefore, without loss of generality, we can assume that $A = \frac{1}{2}I.$ In that case $\hat{f}$ takes the form $\hat{f}(y) = H(y) e^{-\frac{1}{2} |y|^2}$ for some polynomial $H$ with $\deg(H) = \deg(P) =k$.
Thus, by Proposition \ref{deg}, there exists a positive constant, say $C_1$ such that
$$ K_a(f) = E(P,H,a) \leq C_1 (1-a^2)^{-(n+2k)/2}.$$
Similarly for a given polynomial $Q$ with $\deg(Q) = m$, in view of the lower bound estimate of Proposition \ref{deg} there exists a positive constant, say $C_2$ such that
$$ K_a(Q \varphi_0) \geq C_2 (1-a^2)^{-(n+2m)/2}.$$
If $k \leq m$, then $(1-a^2)^{m-k} \leq 1$ for all $0<a<1$ and therefore,
\begin{eqnarray*}
K_a(f) &\leq& C_1 (1-a^2)^{-(n+2k)/2}\\
&\leq& C_1 (1-a^2)^{-(n+2m)/2} \leq C_1 C_2^{-1} K_a(Q \varphi_0).
\end{eqnarray*}
Conversely, suppose that $K_a(f) \leq C K_a(Q \varphi_0)$ for some polynomial $Q$ with $\deg(Q) = k$. As above, this implies that for all $0<a<1$
$$K_a(f) \leq \tilde{C} (1-a^2)^{-(n+2k)/2} \leq \tilde{C} (1-a)^{-(n+2k)/2}$$
for some positive constant $\tilde{C}$. We will prove that $f(x) = P(x) e^{-(Ax , x)}$ for some positive definite matrix $A$ and a polynomial $P$. Then the fact that $\deg(P) \leq k$ will follow from Proposition \ref{deg}. The proof of this theorem is adapted from that of Demange in \cite{D}. We make use of the Bargmann transform $ B $ which takes $ L^2(\R^n)$ isometrically onto the Fock space consisting of all entire functions on $ \C^n $ that are square integrable with respect to the Gaussian measure $ (4\pi)^{-n/2}
e^{-\frac{1}{2} |z|^2} dz. $ The transform $ B $ is explicitly given by
$$ Bg(z)= \pi^{-n/2}e^{- \frac {1}{4}z^2} \int_{\mathbb{R}^n}g(\xi)~ e^{- \frac{1}{2} |\xi|^2} e^{z \cdot \xi} \, d\xi $$
where $g \in L^2(\R^n)$ and $z \in \C^n .$ Throughout this article by $z \cdot \xi$ we mean $x \cdot \xi + i y \cdot \xi.$ In estimating $ Bf $ we make use of another important property of the Bargmann transform, viz., $ Bg(-iz) = B\hat{g}(z).$ We apply the Bargmann transform on $f$ and $\hat{f}$ simultaneously to get
$$ Bf(z)B\hat{f}(z) =  \pi^{-n}  e^{- \frac {1}{2}z^2} 
\int_{\R^n} \int_{\R^n} f(\xi) \hat{f}(\eta)~ 
e^{- \frac{1}{2}(|\xi|^2 + |\eta|^2)} e^{z \cdot (\xi + \eta)} \, d\xi \, d\eta $$
which is bounded by
$$ \pi^{-n}  e^{-\frac{1}{2}(|x|^2-|y|^2)} \int_{\R^n} \int_{\R^n} |f(\xi)| |\hat{f}(\eta)|~ e^{- \frac{1}{2}(|\xi|^2 + |\eta|^2)} e^{x \cdot (\xi + \eta)} \, d\xi \, d\eta.$$
Now $e^{- \frac{1}{2}(|\xi|^2 + |\eta|^2)} 
e^{x \cdot (\xi + \eta)} $ can be written as
$$  e^{a \xi \cdot \eta} e^{-\frac{1}{2}|\xi-(x-a\eta)|^2} e^{-\frac{1}{2}(1-a^2) \left |\eta-\frac{1}{1+a}x \right|^2} e^{\frac{1}{2} \left( \frac{1-a}{1+a} \right)|x|^2} e^{\frac{1}{2}|x|^2} $$
which is bounded by $ e^{a |\xi \cdot \eta|} e^{\frac{1}{2} \left( \frac{1-a}{1+a} \right)|x|^2} e^{\frac{1}{2}|x|^2} .$ Therefore,
\begin{eqnarray}
\nonumber |Bf(z)B\hat{f}(z)| &\leq& \pi^{-n} K_a(f) \exp \left( \frac{1}{2} \left(|y|^2 + \frac{1-a}{1+a}|x|^2 \right) \right)\\
&\leq& C_1 (1-a)^{-(n+2k)/2} \exp \left( \frac{1}{2} \left(|y|^2 + \frac{1-a}{1+a}|x|^2 \right) \right). \label{BDJ-type-eq1}
\end{eqnarray}
The above estimate (\ref{BDJ-type-eq1}) holds for all $z \in \C^n $ and for all $0<a<1.$ Now for $z = x+iy$ with $|x|>1,$ we choose $a$ such that $(1-a)|x|^2 = 1.$ Then (\ref{BDJ-type-eq1}) becomes
\begin{eqnarray}
\nonumber |Bf(z)B\hat{f}(z)| &\leq& C_1 |x|^{n+2k}\exp \left( \frac{1}{2} \left(|y|^2 + \frac{1}{1+a} \right) \right)\\
&\leq& C_1 e^{1/2} (1+|x|)^{n+2k} \exp \left( \frac{1}{2} |y|^2\right). \label{BDJ-type-eq2}
\end{eqnarray}
Now for all $|x| \leq 1,$ (\ref{BDJ-type-eq1}) gives
\begin{eqnarray*}
|Bf(z)B\hat{f}(z)| &\leq& C_1 (1-a)^{-(n+2k)/2} \exp \left( \frac{1}{2} \left(|y|^2 + \frac{1-a}{1+a} \right) \right) \\
&\leq& C_1 e^{1/2} (1-a)^{-(n+2k)/2} \exp \left( \frac{1}{2} |y|^2 \right)
\end{eqnarray*}
which holds for all $a$ with $0<a<1.$ So we can let $a \to 0$ in the above estimate and then together with (\ref{BDJ-type-eq2}), we get for all $z \in \C^n,$
\begin{eqnarray} \label{BDJ-type-eq3}
|Bf(z)B\hat{f}(z)| \leq C_2 (1+|x|)^{n+2k} \exp \left( \frac{1}{2} |y|^2 \right)
\end{eqnarray}
where $C_2 = C_1 e^{1/2}.$ Similar arguments can be given to verify that $Bf(z)B\hat{f}(-z)$ also satisfies the estimate of (\ref{BDJ-type-eq3}). Now Since $K_a(f) = K_a(\hat{f}),$ the same estimate (\ref{BDJ-type-eq3}) holds when $f$ is replaced by $\hat{f}$ and then in view of $B\hat{f}(z) = Bf(-iz),$ we have
\begin{eqnarray} \label{BDJ-type-eq4}
|Bf(z)B\hat{f}(z)| \leq C_2 (1+|y|)^{n+2k} \exp \left( \frac{1}{2} |x|^2\right).
\end{eqnarray}
Now we recall the following variant of Phragm$\acute{\textup{e}}$n-Lindel\"{o}f principle:
\begin{lem} \label{Phrag-Lind} Suppose $G \colon \C^n \to C$ be an entire function which satisfies the following estimates:
\begin{eqnarray*}
|G(z)| &\leq& C (1+|z|)^m e^{b|y|^2},\\
|G(\xi)| &\leq& C (1+|\xi|)^m e^{-b|\xi|^2}
\end{eqnarray*}
for all $z=x+iy \in \C^n, \xi \in \R^n$. Then $G(z) = P(z) e^{-bz^2}$ where $P$ is a polynomial of degree at most $m$.
\end{lem}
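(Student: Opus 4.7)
The plan is to pass to the entire function $F(z) := G(z) e^{b z^2}$ on $\C^n$ and show that it is a polynomial of degree at most $m$; then $G(z) = F(z) e^{-bz^2}$ will have the required form. First I would record the basic estimates on $F$. Using $|e^{bz^2}| = e^{b(|x|^2-|y|^2)}$ for $z = x+iy \in \C^n$, the two hypotheses on $G$ combine to give
$$ |F(z)| \leq C(1+|z|)^m e^{b|x|^2} \quad \text{on } \C^n, \qquad |F(\xi)| \leq C(1+|\xi|)^m \quad \text{on } \R^n, $$
and, as the special case $x = 0$ of the first estimate, $|F(iy)| \leq C(1+|y|)^m$ on $i\R^n$. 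Hence $F$ is entire of order at most $2$, is polynomially bounded of degree $\leq m$ on both the real and the imaginary subspaces, and its growth in general complex position is controlled purely by the real part of $z$.

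Next I would reduce to one complex variable by slicing along real directions. For each real unit vector $\omega \in S^{n-1}$, set $f_\omega(\zeta) := F(\zeta \omega)$ for $\zeta = s + it \in \C$. Since $|\Re(\zeta \omega)| = |s|$, the inherited bounds are
$$ |f_\omega(\zeta)| \leq C(1+|\zeta|)^m e^{bs^2}, \quad |f_\omega(s)| \leq C(1+|s|)^m, \quad |f_\omega(it)| \leq C(1+|t|)^m. $$
If each $f_\omega$ is shown to be a polynomial of degree $\leq m$, then writing $F(z) = \sum_\alpha a_\alpha z^\alpha$ gives $f_\omega(\zeta) = \sum_k \zeta^k \bigl(\sum_{|\alpha| = k} a_\alpha \omega^\alpha\bigr)$; for each $k > m$, the coefficient $\omega \mapsto \sum_{|\alpha| = k} a_\alpha \omega^\alpha$ is a homogeneous polynomial in $\omega$ vanishing on the unit sphere, hence identically, which forces $a_\alpha = 0$ for all $|\alpha| > m$ and completes the lemma.

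The heart of the matter is thus the one-dimensional assertion, and this will be the main obstacle: an entire $f$ on $\C$ with $|f(s)| \leq C(1+|s|)^m$ on $\R$, $|f(it)| \leq C(1+|t|)^m$ on $i\R$, and $|f(\zeta)| \leq C(1+|\zeta|)^m e^{b(\Re \zeta)^2}$ on $\C$ must be a polynomial of degree $\leq m$. My plan is to apply the Phragm\'en--Lindel\"of principle in each of the four quadrants of $\C$. The refined direction-dependent bound yields, for $f$ of order exactly $2$, the indicator estimate $h(\theta) \leq b \cos^2 \theta$, while the axis bounds give $h(0), h(\pi/2), h(\pi), h(3\pi/2) \leq 0$. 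Because each quadrant has opening $\pi/2$ and $f$ has order $2$, this is precisely the critical case of Phragm\'en--Lindel\"of, which is the key technical hurdle. I would handle it by exploiting the trigonometric convexity of the order-$2$ indicator on the sub-intervals $[0, \pi/2 - \delta]$ (with endpoint data $h(0) \leq 0$ and $h(\pi/2 - \delta) \leq b \sin^2 \delta$), passing $\delta \to 0^+$ to obtain $h(\theta) \leq 0$ throughout $[0, \pi/2]$ and, by symmetry, on all of $[0, 2\pi]$. With $f$ of order at most $2$ and minimum type and polynomially bounded on two perpendicular axes, a Hadamard factorization argument, in which the axis bounds force the degree-two exponential factor in the canonical product to be trivial and the zero set to be finite, then shows that $f$ is a polynomial of degree $\leq m$.
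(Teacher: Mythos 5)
The paper does not actually prove this lemma; it refers the reader to Theorems 1.4.4--1.4.5 of \cite{Th2}, so you are supplying an argument where the authors supply a citation. Your architecture is sound and is in the spirit of the standard proof: passing to $F(z)=G(z)e^{bz^2}$ and checking $|F(z)|\leq C(1+|z|)^m e^{b|x|^2}$ with polynomial bounds on $\R^n$ and $i\R^n$ is correct; the reduction to one variable by restricting to complex lines $\zeta\mapsto F(\zeta\omega)$, $\omega\in S^{n-1}$, and the recombination via homogeneous polynomials vanishing on the sphere, is correct; and your handling of the critical Phragm\'en--Lindel\"of case via $2$-trigonometric convexity of the indicator on $[0,\pi/2-\delta]$, letting $\delta\to 0^+$ (where the endpoint datum $b\sin^2\delta$ beats the degeneration $\sin 2\delta\sim 2\delta$ of the denominator), is a legitimate way to conclude $h_F\leq 0$, i.e.\ that $F$ is of order $2$ and minimal type. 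Note that the refined bound $e^{b(\Re\zeta)^2}$ rather than $e^{b|\zeta|^2}$ is essential here, as the example $e^{-i\zeta^2}$ (bounded on both axes, order $2$ normal type, not a polynomial) shows; you do have and use this refinement.

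The genuine gap is the last step. From ``order $\leq 2$, minimal type, polynomially bounded on $\R\cup i\R$'' you want ``polynomial of degree $\leq m$,'' and the Hadamard factorization sketch you offer does not deliver it: minimal type does not by itself annihilate the quadratic term in the exponential factor (the indicator of the canonical product can in principle offset that of $e^{az^2}$ in some directions, and one would have to rule this out), and nothing in the factorization forces the zero set to be finite. The correct tool is the minimal-type refinement of Phragm\'en--Lindel\"of at the critical opening (Titchmarsh, \emph{Theory of Functions}, \S 5.62): if $f$ is holomorphic in an angle of opening exactly $\pi/\rho$, satisfies $f(z)=O(e^{\epsilon|z|^\rho})$ for every $\epsilon>0$ there, and is bounded on the two arms, then it is bounded throughout the angle. (That $h_F\leq 0$ pointwise upgrades to a uniform minimal-type bound follows from the continuity of the indicator of a finite-type function.) Apply this in each quadrant to $F(\zeta)/(\zeta-\zeta_0)^{m}$ with $\zeta_0$ outside the closed quadrant to get $|F(\zeta)|\leq C(1+|\zeta|)^m$ on all of $\C$, and finish with the extended Liouville theorem. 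With that substitution your proof closes; as it stands, the final paragraph asserts a true statement with an argument that does not prove it.
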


For a proof of this result we refer to Theorems 1.4.4, 1.4.5 on Page 22-23 in \cite{Th2}. Let us write $F(z) = e^{-\frac{1}{2}z^2} Bf(z)B\hat{f}(z)$. Then for all $\xi \in \R^n$
\begin{eqnarray*}
|F(\xi)| &=& e^{-\frac{1}{2}|\xi|^2} |Bf(\xi)B\hat{f}(\xi)| \\
&\leq& C_2 (1+|\xi|)^{n+2k} e^{-\frac{1}{2}|\xi|^2}
\end{eqnarray*}
using the estimate (\ref{BDJ-type-eq3}). And for all $z \in \C^n$
\begin{eqnarray*}
|F(z)| &=& e^{-\frac{1}{2}(|x|^2-|y|^2)} |Bf(z)B\hat{f}(z)| \\
&\leq& C_2 (1+|y|)^{n+2k} e^{\frac{1}{2}|y|^2}
\end{eqnarray*}
using the estimate (\ref{BDJ-type-eq4}). Therefore, Lemma \ref{Phrag-Lind} can be applied to $F$ to conclude that $F(z) e^{\frac{1}{2}z^2}$ is a polynomial of degree at most $n+2k$, which is same as saying that $Bf \cdot B\hat{f}$ is a polynomial of degree at most $n+2k$. Now we recall the following result (Lemma 2.3 in \cite{BDJ})
\begin{lem} \label{BDJ-factorization} Let $\varphi$ be an entire function of order 2 on $\C^n$ such that, on every complex line, either $\varphi$ is identically $0$ or it has at most $N$ zeros. Then, there exists a polynomial $R$ with degree at most $N$ and a polynomial $S$ with degree at most $2$ such that $\varphi(z) = R(z) e^{S(z)}.$
\end{lem}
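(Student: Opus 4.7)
The plan is to combine Hadamard's factorization in one complex variable with an algebraicity argument for the zero divisor of $\varphi$. First, I would restrict $\varphi$ to a generic complex line $L_{a,b} = \{a + tb : t \in \C\}$. The one-variable function $t \mapsto \varphi(a+tb)$ is entire of order at most $2$ and, by hypothesis, has at most $N$ zeros, so Hadamard's factorization theorem gives
\[
\varphi(a+tb) = R_{a,b}(t)\, e^{S_{a,b}(t)},\qquad \deg R_{a,b} \le N,\quad \deg S_{a,b} \le 2.
\]
In particular, counted with multiplicity the zero divisor of $\varphi$ meets each complex line $L$ (not contained in $\{\varphi=0\}$) in at most $N$ points, uniformly in $L$.

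Second, I would convert this line-wise information into a global polynomial factor. The uniform bound on line intersections implies that the zero divisor of $\varphi$ is an algebraic hypersurface of degree at most $N$: a Bezout-type count of the intersection with generic complex lines caps the degree of a defining polynomial, and the multiplicities along any line already appear inside the one-variable factor $R_{a,b}$. Let $R(z)$ be the resulting polynomial on $\C^n$ with $\deg R \le N$ whose zero divisor matches that of $\varphi$. The quotient $\psi(z) := \varphi(z)/R(z)$ is a meromorphic function whose divisor is identically zero; by Riemann's removable singularity theorem in several complex variables, $\psi$ extends to an entire, nowhere-vanishing function on $\C^n$.

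Third, since $\C^n$ is simply connected, the zero-free entire $\psi$ admits a global logarithm, so $\psi = e^S$ for some entire $S$. The order-$2$ hypothesis on $\varphi$ together with the polynomial nature of $R$ yields $|\psi(z)| \le C \exp(c|z|^2)$, whence $\mathrm{Re}\, S(z) \le c|z|^2 + O(\log(1+|z|))$. A Borel-Carath\'eodory / Phragm\'en-Lindel\"of argument (or, equivalently, restricting to each complex line, comparing with $S_{a,b}(t)$ from step one, and using connectedness to fix the $2\pi i \Z$ ambiguity) forces $S$ to be a polynomial of degree at most $2$, completing the factorization $\varphi = R\, e^S$ with the required degree bounds. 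The main obstacle, by some distance, is the algebraicity step: turning a pointwise zero-counting bound on every line into a single global polynomial $R$ of degree $\le N$ that captures the full zero divisor of $\varphi$. The one-variable Hadamard step and the concluding bound on $\deg S$ are by contrast routine, and presumably this is where \cite{BDJ} do the real work that the authors are content to quote.
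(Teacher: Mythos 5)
The paper does not actually prove this statement: it is quoted verbatim as Lemma~2.3 of \cite{BDJ}, so there is no in-paper argument to measure yours against, and your sketch has to stand on its own. Its first and third steps are fine and, as you say, routine. The difficulty sits exactly where you locate it, but the justification you offer for that step does not work as stated: a ``Bezout-type count'' is circular here, because Bezout bounds the intersection of a line with an \emph{algebraic} hypersurface, whereas the whole point is to show that the analytic hypersurface $Z(\varphi)$ --- about which you know only that it meets every line in at most $N$ points --- is algebraic at all. What actually closes this gap is a Stoll/Bishop--Chow argument: the uniform line-intersection bound controls, via the Crofton--Wirtinger formula, the $(2n-2)$-dimensional volume of the zero divisor in balls of radius $r$ by $O(r^{2n-2})$; Bishop's extension theorem then shows that the closure of $Z(\varphi)$ in $\mathbb{P}^n$ is an analytic subvariety, Chow's theorem makes it algebraic, and only then does the generic line count bound the degree of a defining polynomial by $N$. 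That is a substantial theorem, not a remark, and without it your $R$ does not exist. Two smaller points: the zeros on each line must be counted \emph{with multiplicity} for the bound $\deg R\le N$ to survive (otherwise $\varphi=g^2$ already defeats it), and you should take $R=\prod_i P_i^{m_i}$ with $m_i$ the vanishing order of $\varphi$ along the component cut out by $P_i$, so that $\operatorname{div}R=\operatorname{div}\varphi$ and $\varphi/R$ is locally a unit everywhere before you invoke the extension theorem.

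It is also worth knowing that the route taken in \cite{BDJ} sidesteps the algebraic geometry entirely, which is presumably why the present authors are content to cite it. Roughly: one applies Hadamard on each line through the origin, shows that the coefficients of the quadratic exponential factor depend polynomially on the direction (so that a single quadratic polynomial $S$ on $\C^n$ can be split off), and then observes that $\varphi e^{-S}$ restricted to every such line is a polynomial of degree at most $N$ in the line parameter; expanding $\varphi e^{-S}$ into homogeneous components immediately forces all components of degree greater than $N$ to vanish, so $\varphi e^{-S}$ is itself a polynomial of degree at most $N$. Your plan can be completed, but only by importing Bishop and Chow; the more elementary argument extracts the exponential factor first and never needs to know that the zero set is algebraic.
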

Since $Bf \cdot B\hat{f}$ is a polynomial, it is easy to see that $Bf$ satisfies the hypothesis of Lemma \ref{BDJ-factorization} and therefore, $Bf(z) = R(z) e^{S(z)}$ where $R$ is a polynomial of degree at most $\frac{n+2k}{2}$ and $S$ is a polynomial of degree at most 2. Now, $Bf(z)B\hat{f}(z) = Bf(z)Bf(-iz) = R(z) R(-iz) e^{S(z)+S(-iz)}.$
But since $Bf \cdot B\hat{f}$ is a polynomial, we must have $S(z)+S(-iz) = 2m(z)\pi i,$ for some $m(z) \in \Z.$ The continuity of $S$ implies that $m \colon \C^n \to \Z$ is continuous. Therefore, $m$ has to be a constant function. Without loss of generality, we can assume that $m=0$, which means $S(z)+S(-iz) = 0.$ This together with the fact that $S$ is a polynomial of degree at most 2 forces $S$ to be a homogeneous polynomial. If we denote the Gaussian function by $\varphi_0(x) = e^{-\frac{1}{2}|x|^2}$, then by definition of the Bargmann transform we have
\begin{eqnarray*}
\hat{f} * \varphi_0 (x) &=& \pi^{n/2} e^{-\frac{1}{4}|x|^2} B\hat{f}(x) \\
&=& \pi^{n/2} R(-ix) e^{-\frac{1}{4}|x|^2 + S(-ix)} \\
&=& \pi^{n/2} R(-ix) e^{-((C+iD)x, x)}.
\end{eqnarray*}
Since $S$ is homogeneous, it follows that $C$ and $D$ are symmetric. Also, integrability of $\hat{f}$ implies integrability of $\hat{f} * \varphi_0$ which forces $C$ to be positive definite. Taking the Fourier transform on both sides we get
$$f(-\xi) \varphi_0(\xi) = \tilde{R}(\xi) e^{-\frac{1}{4}((C+iD)^{-1}\xi, \xi)}$$
for some polynomial $\tilde{R}$ with $\deg(\tilde{R}) = \deg(R)$. Rewriting the above expression, we have
$$f(\xi) = P(\xi) e^{-\frac{1}{2}((A+iB)\xi, \xi)}$$
where $A$ and $B$ are real symmetric matrices. Also, the integrability of $f$ implies that $A$ is positive definite. As described in the beginning of the proof of this theorem that without loss of generality we can assume that $A = I$. Now $\hat{f}(y) = \tilde{Q}(y) e^{-\frac{1}{2} ((I+iB)^{-1}y , y)}$ for some polynomial $\tilde{Q}.$ Notice that $(I+iB)^{-1} = (I-iB)(I+B^2)^{-1}$ and thus $K_a(f) < \infty$ implies that the homogeneous polynomial $(\xi, \xi) + ((I+B^2)^{-1}\eta , \eta) - 2a(\xi, \eta)$ is non-negative. Since this is true for all $0<a<1,$ it follows that the homogeneous polynomial $(\xi, \xi) + ((I+B^2)^{-1}\eta , \eta) - 2(\xi, \eta)$ is non-negative. Our claim is that it is possible if and only if $B=0$. To see this, let $\eta_0$ be an eigenfunction corresponding to an eigenvalue $\lambda_0$ of the real symmetric matrix $B$. Then, $(I + B^2)^{-1} \eta_0 = (1+\lambda_0^2)^{-1} \eta_0$. Thus for all $\xi \in \R^n$,
$$ (\xi, \xi) + (1+\lambda_0^2)^{-1} (\eta_0 , \eta_0) - 2(\xi, \eta_0) \geq 0.$$
In particular, take $\xi = \eta_0$ to get $\left( (1+\lambda_0^2)^{-1} -1\right) \|\eta_0\|^2 \geq 0,$ which implies $\lambda_0 = 0$. This means the only eigenvalue of the symmetric matrix $B$ is 0 which proves that $B=0$. This completes the proof of the theorem.

We will now show that the condition in Theorem \ref{*} and that in Theorem \ref{BDJ} are equivalent. This follows from Propositions \ref{deg} and \ref{prop-degree-BDJ} and the following two lemmas (Lemmas \ref{***} and \ref{****}).

\begin{lem} \label{***}
Let $f \in L^2(\R^n)$ be such that for some $N \geq 0$
$$ E = \int_{\R^n}\int_{\R^n} \frac{|f(\xi)| |\hat{f}(\eta)|}{(1+|\xi|+|\eta|)^{N}}
e^{|\xi \cdot \eta|} \, d\xi \, d\eta < \infty .$$
Then there exists a positive constant $C$ such that for all $0<a<1$
$$ K_a(f) \leq C(1-a^2)^{-N} $$
\end{lem}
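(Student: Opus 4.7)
The plan is to combine the characterization provided by Theorem \ref{BDJ} with the quantitative upper bound already established in Proposition \ref{deg}. The heart of the argument is that the hypothesis $E < \infty$ forces $f$ to be a Gaussian times a polynomial, at which point $K_a(f)$ is exactly the kind of integral that Proposition \ref{deg} controls.

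First I would apply Theorem \ref{BDJ} to conclude that $f(x) = P(x) e^{-(Ax, x)}$ for some positive definite symmetric matrix $A$ and some polynomial $P$ with $\deg P < (N-n)/2$. A direct Gaussian integration then gives $\hat f(y) = H(y) e^{-(A^{-1}y, y)/4}$ for some polynomial $H$ with $\deg H = \deg P$.

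Next I would reduce to the standard Gaussian case by a linear change of variables. Let $B$ be the symmetric positive definite square root of $2A$, so that $A = \tfrac{1}{2} B^{2}$. Substituting $\xi = B^{-1} \alpha$, $\eta = B \beta$ gives $\det$-Jacobian equal to one, and $\xi \cdot \eta = \alpha \cdot \beta$, $(A \xi, \xi) = \tfrac{1}{2}|\alpha|^2$, $(A^{-1}\eta, \eta)/4 = \tfrac{1}{2}|\beta|^2$. Setting $\tilde P(\alpha) = P(B^{-1}\alpha)$ and $\tilde H(\beta) = H(B \beta)$, which are polynomials of the same degree as $P$, the substitution yields
$$K_a(f) = \int_{\R^n}\int_{\R^n} |\tilde P(\alpha)| |\tilde H(\beta)| \, e^{-\tfrac{1}{2}|\alpha|^2} e^{-\tfrac{1}{2}|\beta|^2} e^{a|\alpha \cdot \beta|} \, d\alpha \, d\beta = E(\tilde P, \tilde H, a).$$

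Applying the upper bound half of Proposition \ref{deg} then gives $K_a(f) \leq C (1 - a^{2})^{-(n + 2\deg P)/2}$. Since $\deg P < (N - n)/2$ implies $n + 2 \deg P < N$, and since $s \mapsto (1 - a^{2})^{-s}$ is non-decreasing in $s$ for $0 < a < 1$, this bound is dominated by $C(1 - a^{2})^{-N}$, proving the claim.

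The substantive input is Theorem \ref{BDJ}, which supplies the explicit Gaussian structure of $f$; once that is in hand there is no real obstacle, the remaining work being a careful bookkeeping of the change of variables (so that $f$, $\hat f$ and $|\xi \cdot \eta|$ all normalize simultaneously) and a verification that the polynomial degrees are preserved under $B^{\pm 1}$, so that Proposition \ref{deg} applies with the right exponent.
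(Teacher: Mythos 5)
Your argument is correct, but it takes a genuinely different route from the paper's. You invoke the full Bonami--Demange--Jaming characterization (Theorem \ref{BDJ}) to conclude that $f$ is a polynomial times a Gaussian, normalize by the linear change of variables $\xi = B^{-1}\alpha$, $\eta = B\beta$ with $B=(2A)^{1/2}$, and then read off the bound from the upper-bound half of Proposition \ref{deg}; the bookkeeping is right ($\deg H=\deg P$, the Jacobians cancel, $n+2\deg P < N$, and $(1-a^2)^{-s}$ is increasing in $s$), and the degenerate case $N\le n$, where Theorem \ref{BDJ} forces $f=0$, is trivially covered. The paper instead proves the lemma directly from the hypothesis $E<\infty$ with no structure theorem: it first extracts a small exponential weight ($\int_{\R^n}|f(\xi)|e^{\delta|\xi|}\,d\xi<\infty$ and likewise for $\hat f$) from the finiteness of $E$ via a linear-independence argument, then splits $\R^n\times\R^n$ according to whether $|\xi\cdot\eta|\ge \frac{\delta}{a}(1+|\xi|+|\eta|)$; on the complementary set $e^{a|\xi\cdot\eta|}$ is absorbed by the exponential weights, while on the main set the denominator $(1+|\xi|+|\eta|)^N$ is traded for $\left(\frac{a}{\delta}|\xi\cdot\eta|\right)^N$ and the elementary bound $\sup_{r\ge 0}r^N e^{-r}<\infty$ produces the factor $(1-a)^{-N}$. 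The trade-off: your proof is shorter but rests on the deep ``only if'' direction of Theorem \ref{BDJ}, whereas the paper's is elementary and self-contained. Since the paper introduces Lemmas \ref{***} and \ref{****} precisely to relate the two integrability conditions (the hypotheses of Theorems \ref{*} and \ref{BDJ}) directly at the level of the conditions, routing Lemma \ref{***} through the conclusion of Theorem \ref{BDJ} is logically sound --- there is no circularity, as that theorem is quoted from the literature --- but it makes the stated equivalence depend on the hard characterization rather than standing on its own.
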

\begin{proof}
Without loss of generality assume that $f \neq 0.$
Write
$$A(\eta) = \int_{\R^n} \frac{|f(\xi)|} {(1+|\xi|)^{N}} e^{|\xi \cdot \eta|} \, d\xi.$$
Then,
$$ \int_{\R^n} \frac{A(\eta)}{(1+|\eta|)^{N}} |\hat{f}(\eta)| \, d\eta \leq E < \infty $$
and thus $A(\eta) \hat{f}(\eta)$ is finite almost everywhere. Also since $\hat{f} (\neq 0)$ is finite almost everywhere (being in $L^2(\R^n)),$ it follows that $|U| > 0$ where $U = \{ \eta \in \R^n : A(\eta) < \infty\}$ and $|U|$ is the Lebesgue measure of $U.$ Now we claim that there exists $n$ linearly independent vectors in $U.$ For this choose $\eta^1 \in U$. Now $|U \setminus \R \eta^1| = |U| >0.$ Now choose $\eta^2 \in U \setminus \R \eta^1.$ Again $|U \setminus (\R \eta^1 + \R \eta^2)| = |U| >0.$ Continuing this way we can get a linearly independent set, say $\{ \eta^1, \eta^2, ...., \eta^n \}$ in $U.$ Let $A = (a_{ij})$ be the $n \times n$ matrix whose $j^{th}$ row forms the vector $\eta^j$ and let $B = (b_{ij})$ be the inverse of $A.$ Let $m' = \max_{1 \leq i,j \leq n} |b_{ij}|$, then clearly $m' >0.$
Also by definition of $U,$ $A(\eta^j) < \infty$ which in particular implies
$$ \int_{\R^n} \frac{|f(\xi)|}{(1+|\xi|)^N} e^{|a_{j1} \xi_1 +....+ a_{jn} \xi_n|} \, d\xi < \infty $$
for each $j = 1,2,....,n.$ Now,
$$ \left| \xi_k \right| = \left| \sum_{j=1}^n b_{kj} (a_{j1} \xi_1 +....+ a_{jn} \xi_n) \right| \leq m' \sum_{j=1}^n  |a_{j1} \xi_1 +....+ a_{jn} \xi_n|.$$
This implies
\begin{eqnarray*}
e^{\frac{1}{m' n}|\xi_k|} \leq \prod_{j=1}^n \left( e^{|a_{j1} \xi_1 +....+ a_{jn} \xi_n|} \right)^{1/n} \leq \frac{1}{n} \sum_{j=1}^n e^{|a_{j1} \xi_1 +....+ a_{jn} \xi_n|}
\end{eqnarray*}
and therefore, for each $k = 1,2,....,n,$
$$ \int_{\R^n} \frac{|f(\xi)|} {(1+|\xi|)^{N}} e^{\frac{1}{m' n}|\xi_k|} \, d\xi < \infty. $$
Again,
$$ e^{\frac{1}{m' n^2}|\xi|} \leq \prod_{j=1}^n \left( e^{\frac{1}{m' n}|\xi_j|} \right)^{1/n} \leq \frac{1}{n} \sum_{j=1}^n e^{\frac{1}{m' n}|\xi_j|} $$
which gives
$$ \int_{\R^n} \frac{|f(\xi)|} {(1+|\xi|)^{N}} e^{\frac{1}{m' n^2}|\xi|} \, d\xi < \infty. $$
Similarly,
$$ \int_{\R^n} \frac{|\hat{f}(\eta)|} {(1+|\eta|)^{N}} e^{\frac{1}{m'' n^2}|\eta|} \, d\eta < \infty $$
for some $m'' > 0.$ Let us fix $0 < \delta < \min\{\frac{1}{m' n^2}, \frac{1}{m'' n^2} \}$ so that we have
\begin{eqnarray*}
R_1 &=& \int_{\R^n} |f(\xi)| e^{\delta|\xi|} \, d\xi < \infty,\\
R_2 &=& \int_{\R^n} |\hat{f}(\eta)| e^{\delta|\eta|} \, d\eta < \infty .
\end{eqnarray*}
Let $U_{a,\delta} = \{(\xi,\eta) \in \R^n \times \R^n : |\xi \cdot \eta| \geq \frac{\delta}{a}(1+|\xi|+|\eta|)\},~ V_{a,\delta} = \{(\xi,\eta) \in \R^n \times \R^n : |\xi \cdot \eta| < \frac{\delta}{a}(1+|\xi|+|\eta|)\}$ and write
\begin{eqnarray*}
K_a^1(f) &=& \int \int_{U_{a,\delta}} |f(\xi)| |\hat{f}(\eta)| e^{a|\xi \cdot \eta|} \, d\xi \, d\eta,\\
K_a^2(f) &=& \int \int_{V_{a,\delta}} |f(\xi)| |\hat{f}(\eta)| e^{a|\xi \cdot \eta|} \, d\xi \, d\eta.
\end{eqnarray*}
Then $K_a(f) = K_a^1(f) + K_a^2(f)$ and
$$ K_a^2(f) \leq \int_{\R^n} \int_{\R^n} |f(\xi)| |\hat{f}(\eta)| e^{\delta(1+|\xi|+|\eta|)} \, d\xi \, d\eta = R_1 R_2 e^{\delta}.$$
Now,
$$ K_a^1(f) \leq \int \int_{U_{a,\delta}} \frac{|f(\xi)| |\hat{f}(\eta)|}{(1+|\xi|+|\eta|)^N} \left( \frac{a}{\delta}|\xi \cdot \eta| \right)^N e^{a|\xi \cdot \eta|} \, d\xi \, d\eta $$
which is bounded by the product of $\left( \frac{a}{\delta(1-a)} \right)^N$ and
$$ \int_{\R^n} \int_{\R^n} \frac{|f(\xi)| |\hat{f}(\eta)|}{(1+|\xi|+|\eta|)^N} e^{|\xi \cdot \eta|} \left \{ ((1-a)|\xi \cdot \eta|)^N e^{-(1-a)|\xi \cdot \eta|}\right \} \, d\xi \, d\eta.$$
But we know that for each fixed $N \geq 0,$ $\sup_{r \geq 0}r^N e^{-r} = M < \infty.$ Thus
\begin{eqnarray*}
K_a^1(f) &\leq& \frac{a^N M}{\delta^{N}} (1-a)^{-N} \int_{\R^n} \int_{\R^n} \frac{|f(\xi)| |\hat{f}(\eta)|}{(1+|\xi|+|\eta|)^{N}}
e^{|\xi \cdot \eta|} \, d\xi \, d\eta \\
&=& \frac{a^N M E}{\delta^{N}} (1-a)^{-N} \\
&\leq& M' (1-a^2)^{-N} \hspace{2cm} (\textup{since }0<a<1).
\end{eqnarray*}
If we choose $C = M' + R_1 R_2 e^{\delta}$, then we have
$$ K_a(f) \leq C(1-a^2)^{-N} $$
for all $0<a<1$.
\end{proof}

\begin{lem} \label{****}
Let $f \in L^2(\R^n)$ be such that for all $0<a<1$
$$ K_a(f) \leq C(1-a^2)^{-M} $$
for some $C>0,~ M \geq 0$. Then
$$ E = \int_{\R^n}\int_{\R^n} \frac{|f(\xi)| |\hat{f}(\eta)|}{(1+|\xi|+|\eta|)^{2M+3}} e^{|\xi \cdot \eta|} \, d\xi \, d\eta < \infty.$$
\end{lem}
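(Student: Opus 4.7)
The plan is to convert the family of bounds $K_a(f) \leq C(1-a^2)^{-M}$ into a single integrated estimate by representing $e^{|\xi \cdot \eta|}$ as a weighted average of the tamer quantities $e^{a|\xi\cdot\eta|}$, $0 < a < 1$, against which the hypothesis can be applied fiberwise in $a$. The first step is an elementary one-variable lemma: for all $s \geq 1$,
$$ e^s \leq C_M \, s^{M+3/2} \int_0^1 (1-a)^{M+1/2} e^{as} \, da.$$
This is proved by the substitution $u = (1-a)s$, which rewrites the right-hand side as $e^s \, s^{-M-3/2} \int_0^s u^{M+1/2} e^{-u} \, du$; the tail integral is bounded below by the positive constant $\int_0^1 u^{M+1/2} e^{-u} \, du$ whenever $s \geq 1$.

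Next, I split $E = E_1 + E_2$ according to whether $|\xi \cdot \eta| < 1$ or $|\xi \cdot \eta| \geq 1$. For $E_1$ the bound is trivial: $e^{|\xi \cdot \eta|} \leq e$, and the hypothesis applied at (say) $a = 1/2$ gives $\int\int |f(\xi)| |\hat f(\eta)|\,d\xi\,d\eta \leq K_{1/2}(f) < \infty$, whence by Fubini $\|f\|_1 \|\hat f\|_1 < \infty$ and $E_1 \leq e \|f\|_1 \|\hat f\|_1$.

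For $E_2$, I apply the pointwise inequality with $s = |\xi \cdot \eta|$ and absorb the polynomial loss using
$$ |\xi \cdot \eta|^{M+3/2} \leq (|\xi| \, |\eta|)^{M+3/2} \leq C \,(1+|\xi|+|\eta|)^{2M+3},$$
so that the factor $(1+|\xi|+|\eta|)^{-(2M+3)}$ in the definition of $E$ cancels the polynomial prefactor produced by the one-variable estimate. Swapping the order of integration by Fubini and invoking the hypothesis yields
$$ E_2 \leq C'_M \int_0^1 (1-a)^{M+1/2} K_a(f) \, da \leq C''_M \int_0^1 (1-a)^{M+1/2} (1-a^2)^{-M} \, da = C''_M \int_0^1 \frac{(1-a)^{1/2}}{(1+a)^{M}} \, da,$$
which is finite, completing the proof.

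The main technical point to get right is the calibration of the exponent $M+1/2$ on the weight $(1-a)$: it must be large enough that $(1-a)^{M+1/2}(1-a^2)^{-M}$ is integrable near $a=1$ (which demands exponent $> M-1$), yet small enough that the polynomial prefactor $s^{M+3/2}$ it produces is absorbed by $(1+|\xi|+|\eta|)^{2M+3}$. The choice $M+1/2$ is precisely the one that balances both constraints and leads to the exponent $2M+3$ in the statement; any slack in this matching would either destroy convergence of the $a$-integral or force a larger power of $(1+|\xi|+|\eta|)$ in the denominator.
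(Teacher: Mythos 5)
Your proof is correct, and it takes a genuinely different route from the paper's. The paper decomposes the $(\xi,\eta)$ domain into the bands $U_k=\{\tfrac{k}{2}\le|\xi\cdot\eta|\le k\}$, uses $|\xi|+|\eta|\ge(2|\xi\cdot\eta|)^{1/2}\ge\sqrt{k}$ to extract the factor $k^{-(2M+3)/2}$ from the denominator, applies the hypothesis at the single discrete value $a=1-\tfrac1k$ on each band (where $e^{|\xi\cdot\eta|}\le e\,e^{(1-1/k)|\xi\cdot\eta|}$), and sums the resulting bound $E_k\le C'k^{-3/2}$. You instead keep the domain whole (apart from the same trivial $|\xi\cdot\eta|<1$ piece) and decompose in the parameter $a$, via the exact integral inequality $e^s\le C_M s^{M+3/2}\int_0^1(1-a)^{M+1/2}e^{as}\,da$ for $s\ge1$, then apply the hypothesis fiberwise in $a$ and integrate; Tonelli justifies the interchange since everything is nonnegative. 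Both arguments are instances of the same underlying idea---pay $(1-a)^{-M}$ to replace $e^{|\xi\cdot\eta|}$ by $e^{a|\xi\cdot\eta|}$ with $a$ near $1$, and let the polynomial denominator absorb the cost---but yours makes the calibration of exponents completely transparent (the prefactor $s^{M+3/2}$ is exactly matched by $(1+|\xi|+|\eta|)^{2M+3}$ via $|\xi\cdot\eta|\le|\xi|\,|\eta|\le\tfrac14(1+|\xi|+|\eta|)^2$, and the $a$-integral converges precisely because $(1-a)^{M+1/2-M}$ is integrable), whereas the paper's is slightly more elementary, needing only a sum over $k$. One cosmetic remark: your closing claim that $M+\tfrac12$ is \emph{the} exponent forced by both constraints is a little overstated---any weight exponent $\beta\in(M-1,M+\tfrac12]$ would do for the stated power $2M+3$, with $M+\tfrac12$ being the extremal (sharpest) choice---but this does not affect the validity of the proof.
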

\begin{proof}
It follows from the definition of $K_a(f)$ that $f,~ \hat{f} \in L^1(\R^n).$ Let $U = \{(\xi,\eta) \in \R^n \times \R^n : |\xi \cdot \eta| \leq 1\}.$ For each $k \in \{2,3,4,....... \}$ write $U_k = \{(\xi,\eta) \in \R^n \times \R^n : \frac{k}{2} \leq |\xi \cdot \eta| \leq k\}$ and
$$E_k = \int \int_{U_k} \frac{|f(\xi)| |\hat{f}(\eta)|}{(1+|\xi|+|\eta|)^{2M+3}}
e^{|\xi \cdot \eta|} \, d\xi \, d\eta. $$
Now since $|\xi|+|\eta| \geq (2|\xi| |\eta|)^{1/2} \geq (2|\xi \cdot \eta|)^{1/2},$ we have
\begin{eqnarray*}
E_k &=& \int \int_{U_k} \frac{|f(\xi)| |\hat{f}(\eta)|}{(1+|\xi|+|\eta|)^{2M+3}}
e^{(1-\frac{1}{k})|\xi \cdot \eta|} e^{\frac{1}{k}|\xi \cdot \eta|} \, d\xi \, d\eta \\
&\leq& e k^{-(2M+3)/2} \int_{\R^n} \int_{\R^n} |f(\xi)| |\hat{f}(\eta)| e^{(1-\frac{1}{k})|\xi \cdot \eta|} \, d\xi \, d\eta \\
&\leq& Ce k^{-(2M+3)/2} \left(1- \left(1-\frac{1}{k} \right)^2 \right)^{-M} \\
&\leq& C' k^{-(2M+3)/2} \left(1- \left(1-\frac{1}{k} \right) \right)^{-M} \\
&=& C' k^{-3/2}.
\end{eqnarray*}
And thus
\begin{eqnarray*}
E &\leq& \sum_{k=2}^\infty E_k + \int \int_U \frac{|f(\xi)| |\hat{f}(\eta)|}{(1+|\xi|+|\eta|)^{2M+3}}
e^{|\xi \cdot \eta|} \, d\xi \, d\eta \\
&\leq& C' \sum_{k=2}^\infty k^{-3/2} + e \|f\|_1 \|\hat{f}\|_1 < \infty.
\end{eqnarray*}
\end{proof}

As mentioned in the introduction, we now prove the following result which clearly shows that an exact analogue of Vemuri's theorem is not true in the context of Beurling's theorem. In what follows we denote by  $\Phi_\alpha$ the normalised Hermite functions on $ \R^n.$

\begin{thm} \label{Beurling-Vemuri}
Let $ 0 < a < 1 $ be fixed. There does not exist any $ t > 0 $ such that the Hermite coefficients of every  function $ f $ with the subcritical Beurling's condition
\begin{eqnarray*}
K_a(f) = \int_{\R^n}\int_{\R^n} |f(x)| |\hat{f}(y)| e^{a|x \cdot y|} \, dx \, dy < \infty \end{eqnarray*}
satisfy $ |(f, \Phi_\alpha)| \leq C_t e^{-(2|\alpha|+n)t/2} $ for all $ \alpha \in \N^n .$
\end{thm}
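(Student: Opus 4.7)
The strategy is to produce, for each prescribed $t > 0$, a single function $f$ satisfying $K_a(f) < \infty$ whose Hermite coefficients do not obey $|(f, \Phi_\alpha)| \leq C\, e^{-(2|\alpha|+n)t/2}$ for any constant $C$; equivalently,
\[
 \limsup_{|\alpha|\to\infty}\, |(f, \Phi_\alpha)|\, e^{(2|\alpha|+n)t/2} = \infty.
\]
A natural candidate whose Hermite coefficients decay exponentially, but at a rate that can be chosen arbitrarily slowly, is the Mehler kernel of the Hermite semigroup evaluated at a fixed point.

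Fix a nonzero $y_0 \in \R^n$, say $y_0 = (1, 0, \ldots, 0)$, and for $r \in (0, 1)$ set
\[
 f_r(x) \;:=\; \sum_{\alpha \in \N^n} r^{|\alpha| + n/2}\, \Phi_\alpha(y_0)\, \Phi_\alpha(x).
\]
By Mehler's formula (the tensor product of the classical identity $\sum_k r^k h_k(x) h_k(y) = \pi^{-1/2}(1-r^2)^{-1/2} \exp(-\frac{(1+r^2)(x^2+y^2) - 4rxy}{2(1-r^2)})$), this series collapses to the explicit Gaussian
\[
 f_r(x) = r^{n/2} \pi^{-n/2} (1-r^2)^{-n/2} \exp\!\Big(-\tfrac{1+r^2}{2(1-r^2)}(|x|^2 + |y_0|^2) + \tfrac{2r}{1-r^2}\, x \cdot y_0\Big),
\]
so $f_r$ is a shifted Gaussian and $\widehat{f_r}$ is a modulated Gaussian. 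By completing the square in the joint variable $(x, y) \in \R^{2n}$, exactly in the spirit of Proposition \ref{deg} and the proof of Theorem \ref{*}, one checks that $K_a(f_r) < \infty$ for every $0 < a < 1$; the underlying determinant in the Gaussian integral is once more $(1-a^2)/4 > 0$. Orthonormality of $\{\Phi_\alpha\}$ then reads the Hermite coefficients off the defining series:
\[
 (f_r, \Phi_\alpha) = r^{|\alpha| + n/2}\, \Phi_\alpha(y_0).
\]

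Specialise to the axial multi-indices $\alpha^{(k)} := (k, 0, \ldots, 0)$, so $\Phi_{\alpha^{(k)}}(y_0) = h_k(1)\, h_0(0)^{n-1}$. The classical Plancherel--Rotach asymptotics yield both the upper bound $|h_k(1)| \leq C k^{-1/4}$ for large $k$ and a subsequence $k_j \to \infty$ along which $|h_{k_j}(1)| \geq c\, k_j^{-1/4}$ (obtained by avoiding the isolated zeros of the cosine factor in the asymptotic expansion, or by noting that $\sum_k h_k(1)^2 = \infty$ on the diagonal of the reproducing kernel). Given any target $t > 0$, choose $r \in (e^{-t}, 1)$, so that $r e^t > 1$. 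Then
\[
 \big|(f_r, \Phi_{\alpha^{(k_j)}})\big|\, e^{(2k_j + n)t/2} \;\geq\; c'(r, t, n)\, (r e^t)^{k_j}\, k_j^{-1/4} \;\longrightarrow\; \infty,
\]
which is incompatible with any estimate $|(f_r, \Phi_\alpha)| \leq C_t\, e^{-(2|\alpha|+n)t/2}$. Since $t$ was arbitrary, no such $t > 0$ works uniformly for every $f$ satisfying the subcritical Beurling condition. The only real technical hurdles are the Gaussian integral establishing $K_a(f_r) < \infty$ (a routine mimic of arguments already used in the paper) and the extraction of the Plancherel--Rotach subsequence; neither poses serious difficulty.
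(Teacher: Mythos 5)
Your proof is correct, but it proceeds by a genuinely different route from the paper's. You construct, for each prescribed $t>0$, an explicit counterexample: the translated Gaussian $f_r$ obtained by evaluating the Mehler kernel at a fixed point $y_0$, with $e^{-t}<r<1$. Its Hermite coefficients $r^{|\alpha|+n/2}\Phi_\alpha(y_0)$ are read off by orthonormality, and the divergence of $\sum_k h_k(1)^2$ (itself immediate from Mehler's formula at $x=y=1$ as $r\to 1^-$) guarantees that $|h_k(1)|$ is not $O(\rho^k)$ for any $\rho<1$, which is all that is needed to defeat the factor $e^{-(2|\alpha|+n)t/2}$ along the axial multi-indices; the precise Plancherel--Rotach power $k_j^{-1/4}$ you quote is a luxury, not a necessity. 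The finiteness of $K_a(f_r)$ is indeed a routine Gaussian computation since the relevant quadratic form $A_r s^2 - asu + \tfrac{1}{4A_r}u^2$ has positive discriminant defect $\tfrac{1-a^2}{4}$ and the linear term coming from the translation is harmless. The paper instead argues by contradiction: assuming the uniform decay, it exploits the dilation invariance $K_a(f_\delta)=K_a(f)$ so that every dilate inherits the same coefficient decay, converts that decay via Mehler's formula into pointwise Gaussian bounds $|f_\delta(x)|\le C e^{-\frac12\tanh(s)|x|^2}$, undoes the dilation to push the decay of $f$ past the Hardy threshold $ab>\tfrac14$ relative to the decay of $\hat f$, and concludes $f\equiv 0$ for every $f$ with $K_a(f)<\infty$ --- contradicting the existence of nonzero examples. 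Your argument buys explicitness (it exhibits concrete functions in the class whose Hermite coefficients decay arbitrarily slowly in the exponential scale, quantifying exactly how the proposed bound fails) and avoids invoking Hardy's theorem; the paper's buys a statement-free contradiction that never requires producing or analyzing a specific counterexample. Both hinge on Mehler's formula, used in opposite directions.
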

\begin{proof}
Suppose, to the contrary, that there exists $ t > 0 $ such that $ |(f, \Phi_\alpha)| \leq C_{f,t} e^{-(2|\alpha|+n)t/2}, $ for all $ \alpha \in \N^n$ and for all functions $ f $ satisfying  $K_a(f) < \infty.$
For each $\delta > 0,$ let $f_{\delta}$ stand for the dilation of $f$ given by $f_{\delta}(x) = \delta^{n/2} f(\delta x).$ Then $(f_{\delta})^{\wedge}(\xi) = {\delta}^{-n/2} \hat{f}({\delta}^{-1} \xi)$ and $K_a(f_{\delta}) = K_a(f) < \infty.$ Thus for all $\delta >0$, we should have
\begin{equation*}
|(f_{\delta}, \Phi_\alpha)| \leq C_{f, \delta, t}~ e^{-(2|\alpha|+n)t/2},~ \textup{ for all }\alpha \in \N^n.
\end{equation*}
Now we use the above estimate in the Hermite expansion of $f_{\delta}$ to get
\begin{equation*}
|f_{\delta}(x)| \leq  C_{f, \delta, t} \sum_{\alpha \in \N^n} e^{-(2|\alpha|+n)t/2} |\Phi_{\alpha}(x)|.
\end{equation*}
If we apply Holder's inequality in the above estimate and use the Mehler's formula for Hermite functions (see Proposition 1.2.1 in \cite{Th2}), viz., for all $r<1$,
\begin{equation*}
\sum_{\alpha \in \N^n} r^{|\alpha|} \Phi_{\alpha}(x) \Phi_{\alpha}(y) = \pi^{-n/2} (1-r^2)^{-n/2} e^{-\frac{1}{2} \frac{1+r^2}{1-r^2} (|x|^2 + |y|^2) + \frac{2r}{1-r^2}x \cdot y},
\end{equation*}
then we get that for every $s < t$,
\begin{equation*}
|f_{\delta}(x)| \leq C_1 e^{-\frac{1}{2} \tanh (s)|x|^2}
\end{equation*}
for some constant $C_1$ which is independent of $x$. Fix an $s < t$ and choose $\delta_0$ such that $ \tanh(s) > \delta_0.$ 
Then we have
\begin{equation} \label{Beurling-Vemuri-eq1}
|f(x)| = \delta_0^{-n/2} |f_{\delta_0}({\delta_0}^{-1}x)| \leq C_1 \delta_0^{-n/2} e^{-\frac{1}{2} \delta^{-2}_0 \tanh (s)|x|^2}.
\end{equation}
Since $|(f_\delta, \Phi_\alpha)| = |((f_\delta)^{\wedge}, \Phi_\alpha)|,$ similar estimate holds for $(f_\delta)^{\wedge}$ as well. In particular for $\delta = 1$,
\begin{equation} \label{Beurling-Vemuri-eq2}
|\hat{f}(\xi)| \leq C_2 e^{-\frac{1}{2} \tanh (s)|\xi|^2}.
\end{equation}
With (\ref{Beurling-Vemuri-eq1}) and (\ref{Beurling-Vemuri-eq2}), Hardy's theorem can be applied to conclude that $f = 0.$
\end{proof}

We now prove the following results for eigenfunctions of the Fourier transform.

\begin{thm}
Let $0<a<1$ be fixed. Let $f \in L^2(\R^n)~ (n \geq 1)$ satisfy the subcritical Beurling's condition
$$ K_a(f) = \int_{\R^n}\int_{\R^n} |f(x)| |\hat{f}(y)| e^{a|x \cdot y|} \, dx \, dy < \infty.$$
If, in addition, $f$ is an eigenfunction of the Fourier transform, then there exists a positive constant $C$ (independent of $a$) such that for all $\alpha \in \N^n$,
$$ \left| (f,\Phi_\alpha) \right| \leq C e^{t/2} (K_a(f))^{1/2} \left( \prod_{j=1}^n (2\alpha_j +1)^{1/4n} \right) e^{-(2|\alpha|+n)t/2n}$$
where $t$ is determined by the condition $a = \tanh(2t)$.
\end{thm}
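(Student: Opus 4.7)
Since $f$ is an eigenfunction of the Fourier transform with $\hat f=\lambda f$, $|\lambda|=1$, we have $B\hat f(z)=Bf(-iz)=\lambda Bf(z)$, and hence $|Bf(-iz)|=|Bf(z)|$ on $\C^n$. Inserting this into the pointwise Bargmann estimate derived in the proof of Theorem~\ref{*}---namely
$$|Bf(z)\,B\hat f(z)|\leq \pi^{-n}K_a(f)\exp\!\left(\tfrac{1}{2}|y|^2+\tfrac{b}{2}|x|^2\right),\qquad b:=\tfrac{1-a}{1+a}=e^{-4t}$$---
produces the twin estimates
$$|Bf(z)|^2\leq \pi^{-n}K_a(f)\,e^{(|y|^2+b|x|^2)/2},\qquad |Bf(z)|^2\leq \pi^{-n}K_a(f)\,e^{(|x|^2+b|y|^2)/2}.$$
These two symmetric bounds will be the only analytic inputs.

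Next, I expand $Bf(z)=\pi^{-n/4}\sum_\alpha(f,\Phi_\alpha)\,z^\alpha/\sqrt{2^{|\alpha|}\alpha!}$ and extract the Hermite coefficients via Cauchy's formula applied along a product of elliptical contours adapted to the bounds. On the polyellipse $z_j=R_j\cos\theta_j+i\sqrt{b}\,R_j\sin\theta_j$ the quantity $y_j^2+bx_j^2$ reduces to the constant $bR_j^2$, independent of $\theta_j$, so that the first bound becomes $|Bf(z)|\leq \pi^{-n/2}K_a(f)^{1/2}e^{b\sum_j R_j^2/4}$ uniformly on the contour. With the lower bound $|z_j|\geq \sqrt b\,R_j$, iterated Cauchy then yields, after optimizing each $R_j$ (so that $R_j^2\sim(2\alpha_j+1)/b$) and applying Stirling to $\sqrt{\alpha_j!}$, a one-dimensional-type estimate in the $j$-th coordinate of the form
$$|(f,\Phi_\alpha)|\leq C\,e^{t/2}\,K_a(f)^{1/2}\,(2\alpha_j+1)^{1/4}\,e^{-(2\alpha_j+1)t/2}.$$
The indices $\alpha_k$ with $k\neq j$ enter only through the remaining ellipse radii, which are chosen so that their contribution is absorbed into the constant $C$.

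Since this per-coordinate estimate is symmetric in $j$, it holds for every $j=1,\dots,n$. Multiplying the $n$ estimates together and taking the $n$-th root (a geometric-mean / H\"older step), and using the identity $\sum_{j=1}^n(2\alpha_j+1)=2|\alpha|+n$, produces exactly
$$|(f,\Phi_\alpha)|\leq C\,e^{t/2}\,K_a(f)^{1/2}\prod_{j=1}^n(2\alpha_j+1)^{1/(4n)}\,e^{-(2|\alpha|+n)t/(2n)},$$
which is the claimed bound. The principal obstacle is the per-coordinate estimate itself: a naive Cauchy-on-a-polydisc argument only yields decay of the form $((1+b)/2)^{|\alpha|/2}$, and sharpening this to the Vemuri-type exponential $e^{-(2\alpha_j+1)t/2}$ requires the combined use of the elliptical contour, the eigenfunction symmetry (which makes both bounds available simultaneously so that the $y$-direction growth does not dominate), and a careful balance of the factor $b^{-(\alpha_j+1)/2}$ coming from the minimum of $|z_j|$ on the ellipse against the Gaussian factor $e^{bR_j^2/4}$ from the Cauchy estimate.
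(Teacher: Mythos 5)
Your overall architecture --- the twin Bargmann bounds coming from the eigenfunction hypothesis, a sharp one-variable coefficient estimate applied coordinate by coordinate, and then a geometric mean over $j$ --- is the same as the paper's. The gap is in the step you yourself flag as the principal obstacle: the elliptical-contour Cauchy estimate does not deliver the per-coordinate decay $e^{-(2\alpha_j+1)t/2}$, and the ``careful balance'' you invoke comes out exactly even rather than in your favor. Concretely, in one variable with $b=\frac{1-a}{1+a}=e^{-4t}$, on the ellipse $z=R\cos\theta+i\sqrt{b}\,R\sin\theta$ you do get the uniform bound $|Bf(z)|\leq Ce^{bR^2/4}$, but $|z(\theta)|^2=R^2(\cos^2\theta+b\sin^2\theta)$ dips to $bR^2$ near $\theta=\pi/2$, and the Cauchy integral $\frac{1}{2\pi i}\oint Bf(z)\,z^{-k-1}\,dz$ picks up from a neighbourhood of $\theta=\pi/2$ a factor of order $b^{-k/2}k^{-1/2}$ (from $\int\bigl(b+(1-b)u^2\bigr)^{-(k+1)/2}\,du$). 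Hence $|c_k|\lesssim e^{bR^2/4}R^{-k}b^{-k/2}$; at the optimal radius $R^2=(2k+1)/b$ the factor $R^{-k}$ contributes $b^{k/2}(2k+1)^{-k/2}$, which exactly cancels the loss $b^{-k/2}$, leaving $|c_k|\lesssim(2k+1)^{-k/2}e^{(2k+1)/4}$ with no power of $b$ surviving. After multiplying by $(2^k k!)^{1/2}$ and applying Stirling you get only $|(f,h_k)|\lesssim(2k+1)^{1/4}$ --- no exponential decay at all, which is even weaker than the naive polydisc bound $\bigl(\frac{1+b}{2}\bigr)^{k/2}$ you mention. Since $b^{-k/2}=e^{2tk}$, the penalty for the small values of $|z|$ on the minor axis is itself of the size of the exponential you were trying to produce.

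The underlying reason no contour deformation can work is that the needed estimate $|Bf(z)|\leq C e^{\frac14\sqrt{b}\,|z|^2}$ is strictly stronger than the pointwise minimum of your two bounds: at $|x|=|y|$ that minimum is $e^{\frac{1+b}{8}|z|^2}$ and $\frac{1+b}{2}>\sqrt{b}$, so the improvement must come from analyticity, not from the pointwise data on any contour. This is exactly what the paper does: it invokes the Phragm\'en--Lindel\"of argument from the proof of Theorem 2.1 of Vemuri \cite{V} to interpolate the bounds $e^{\frac14(y^2+bx^2)}$ and $e^{\frac14(x^2+by^2)}$ into the rotation-invariant bound $e^{\frac14\sqrt{b}\,|z|^2}$, after which the ordinary Cauchy estimate on circles of radius $(2k+1)^{1/2}b^{-1/4}$ produces the factor $b^{k/4}=e^{-kt}$ and hence the stated decay. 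You need this interpolation (or an equivalent three-lines input) in place of the elliptical contour; with it, the remainder of your argument --- the treatment of the spectator variables and the geometric-mean step --- goes through essentially as in the paper.
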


\begin{proof}
Once again we make use of the Bargmann transform $ B.$ The most important property of $B$ which we need is that the Taylor coefficients $c_\alpha$ of $Bf$ are related to the Hermite coefficients $(f,\Phi_\alpha)$ of $f.$ More precisely, we have
$$ (f,\Phi_\alpha) = \left( 2^\alpha \alpha!~\pi^{n/2} \right)^{1/2} c_\alpha.$$
Therefore, in order to prove the theorem we only need to estimate the Taylor coefficients of $ Bf $ for which, in view of Cauchy's formula, we need good estimates of $ Bf.$ We have already seen in the proof of Theorem \ref{*} that the given assumption on $f$ leads to
$$ |Bf(z)B\hat{f}(z)| \leq \pi^{-n} K_a(f) \exp \left( \frac{1}{2} \left(|y|^2 + \frac{1-a}{1+a}|x|^2 \right) \right).$$
By assumption $f$ is an eigenfunction of the Fourier transform, therefore $|B\hat{f}(z)| = |Bf(z)|.$ Thus we have
$$ |Bf(z)| \leq \left( \pi^{-n} K_a(f) \right)^{1/2} \exp \left( \frac{1}{4} \left(|y|^2 + \frac{1-a}{1+a}|x|^2 \right) \right).$$
But since $B\hat{f}(z) = Bf(-iz)$ and $|B\hat{f}(z)| = |Bf(z)|$ we also have
$$ |Bf(z)| \leq \left( \pi^{-n} K_a(f) \right)^{1/2} \exp \left( \frac{1}{4} \left(|x|^2 + \frac{1-a}{1+a}|y|^2 \right) \right).$$
In the one dimensional case, we can apply Phragm$\acute{\textup{e}}$n-Lindel\"{o}f principle (see the proof of Theorem 2.1 in \cite{V}) to prove that
$$ |Bf(z)| \leq \left( \pi^{-1} K_a(f) \right)^{1/2} \exp \left( \frac{1}{4} \sqrt{\frac{1-a}{1+a}} \left(|x|^2 + |y|^2 \right) \right).$$
And proceeding with the proof of Theorem 2.1 in \cite{V} we get that
\begin{eqnarray*}
\left| \left( f,h_k \right) \right| \leq C' e^{t/2} \left( K_a(f) \right)^{1/2} \left( 2k+1 \right)^{1/4} e^{-(2k+1)t/2}
\end{eqnarray*}
where $C'$ is independent of $a$ and $t$ is determined by the condition $a = \tanh(2t)$.

In higher dimensions ($n \geq 2$), for each fixed $z_2, \ldots, z_n \in \C$, we think of $Bf(\cdot, z_2, \ldots, z_n)$ as an entire function of one complex variable which is bounded by
\begin{eqnarray*}
\left( \pi^{-n} K_a(f) \right)^{1/2} \prod_{j=2}^n \exp \left( \frac{1}{4} |z_j|^2 \right) \exp \left( \frac{1}{4} \left(y_1^2 + \frac{1-a}{1+a}x_1^2 \right) \right)
\end{eqnarray*}
and
\begin{eqnarray*}
\left( \pi^{-n} K_a(f) \right)^{1/2} \prod_{j=2}^n \exp \left( \frac{1}{4} |z_j|^2 \right) \exp \left( \frac{1}{4} \left(x_1^2 + \frac{1-a}{1+a}y_1^2 \right) \right).
\end{eqnarray*}
Then we can proceed as in \cite{V} (as mentioned above) to get
\begin{eqnarray*}
\left| \left( f,\Phi_\alpha \right) \right| \leq C'' e^{t/2} \left( K_a(f) \right)^{1/2} \left( 2\alpha_1+1 \right)^{1/4} e^{-(2\alpha_1+1)t/2}
\end{eqnarray*}
where $C''$ is independent of $a$. Similarly we can consider other variables of $Bf$ as well. Thus for each $j \in \{1, 2, \ldots, n \}$ we get
\begin{eqnarray*}
\left| \left( f,\Phi_\alpha \right) \right| \leq C'' e^{t/2} \left( K_a(f) \right)^{1/2} \left( 2\alpha_j+1 \right)^{1/4} e^{-(2\alpha_j+1)t/2}.
\end{eqnarray*}
By combining these estimates together, we get that
\begin{eqnarray*}
\left| (f,\Phi_\alpha) \right| \leq C'' e^{t/2} (K_a(f))^{1/2} \left( \prod_{j=1}^n (2\alpha_j +1)^{1/4n} \right) e^{-(2|\alpha|+n)t/2n}.
\end{eqnarray*}
\end{proof}

\begin{rem}
As mentioned earlier, the conclusion of the above theorem in the higher dimensional case is believed not to be the best, the reason being (upto our knowledge) the absence of an appropriate analogue of Phragm$\acute{\textup{e}}$n-Lindel\"{o}f principle. However in a special case we do obtain the best decay.
\end{rem}

\begin{thm}
Let $0<a<1$ be fixed. Let $f \in L^2(\R^n)~ (n \geq 2)$ satisfy the subcritical Beurling's condition
\begin{eqnarray*}K_a(f) = \int_{\R^n}\int_{\R^n} |f(x)| |\hat{f}(y)| e^{a|x \cdot y|} \, dx \, dy < \infty.
\end{eqnarray*}
If, in addition, $f$ is $O(n)-$finite and an eigenfunction of the Fourier transform, then there exists a positive constant $C$ (independent of $a$) such that for all $\alpha \in \N^n$,
\begin{eqnarray*}
\left| (f,\Phi_\alpha) \right| \leq C (K_a(f))^{1/2} \left( \prod_{j=1}^n (2\alpha_j +1)^{1/4} \right) e^{-(2|\alpha|+n)t/2}
\end{eqnarray*}
where $t$ is determined by the condition $a = \tanh(2t).$
\end{thm}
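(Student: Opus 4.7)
The plan is to exploit the $O(n)$-finiteness to reduce the $n$-dimensional Phragm\'en--Lindel\"of obstacle flagged in the remark preceding the statement to a genuinely one-dimensional situation, and then feed the outcome into the Bargmann--Cauchy computation already used in Theorem~\ref{*}. Since the Bargmann transform intertwines the natural $O(n)$-actions on $L^2(\R^n)$ and on the Fock space, and since the Fourier transform commutes with the $O(n)$-action, the $O(n)$-finiteness of $f$ together with the Fourier-eigenfunction hypothesis lets me decompose $f$ into finitely many terms each of the form $P(x) g(|x|)$ with $P$ a solid spherical harmonic of some degree $d$, each term again satisfying (up to a constant) the Beurling condition and again a Fourier eigenfunction of the same eigenvalue. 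By linearity of the target bound it suffices to treat one such term; the structural description of the $O(n)$-isotypes of the Fock space then forces
\[
Bf(z) = P(z)\,h(z \cdot z), \qquad z \in \C^n,
\]
for a uniquely determined entire function $h:\C\to\C$.

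Next I recycle from the proof of Theorem~\ref{*} the two asymmetric bounds
\begin{align*}
|Bf(z)| &\le (\pi^{-n} K_a(f))^{1/2}\exp\!\bigl(\tfrac14\bigl(|y|^2 + \tfrac{1-a}{1+a}|x|^2\bigr)\bigr),\\
|Bf(z)| &\le (\pi^{-n} K_a(f))^{1/2}\exp\!\bigl(\tfrac14\bigl(|x|^2 + \tfrac{1-a}{1+a}|y|^2\bigr)\bigr),
\end{align*}
valid for $z=x+iy\in\C^n$; the second bound uses $B\hat f(z)=Bf(-iz)$ together with $|B\hat f(z)|=|Bf(z)|$, which follows because $f$ is a Fourier eigenfunction. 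Pick a unit vector $\nu\in\R^n$ with $P(\nu)\ne0$ (which exists since $P\not\equiv 0$) and restrict to the complex line $z=t\nu$, so that $Bf(t\nu)=t^d P(\nu)\,h(t^2)$; the two bounds above become, with $(\mathrm{Re}\,t)^2$ and $(\mathrm{Im}\,t)^2$ in place of $|x|^2$ and $|y|^2$, exactly the one-variable bounds that drive Vemuri's Phragm\'en--Lindel\"of argument in \cite{V}. Running that argument verbatim on the entire function $t\mapsto Bf(t\nu)$ gives the symmetric one-variable bound
\[
|Bf(t\nu)| \le C_1(K_a(f))^{1/2}\exp\!\bigl(\tfrac14\sqrt{\tfrac{1-a}{1+a}}\,|t|^2\bigr),\qquad t\in\C.
\]
Dividing by $t^d P(\nu)$ and substituting $w=t^2$ (using that the quotient is entire in $w$ since $h$ is, so the apparent singularity at $t=0$ is removable, and controlling the region $|w|\le 1$ by the maximum principle) yields the global estimate $|h(w)|\le C_2(K_a(f))^{1/2}\exp(b|w|)$ with $b=\tfrac14\sqrt{(1-a)/(1+a)}$ for every $w\in\C$.

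Combining this with $|z\cdot z|\le|z|^2$ and the pointwise bound $|P(z)|\le C_P|z|^d$ gives the improved global estimate $|Bf(z)|\le C_3(K_a(f))^{1/2}(1+|z|)^d\exp(b|z|^2)$ on all of $\C^n$. A multivariate Cauchy estimate on the polydisc with radii $r_j=\sqrt{\alpha_j/(2b)}$, followed by Stirling's formula for $\alpha_j!$ and the identification $(4b)^{|\alpha|/2}=e^{-|\alpha|t}$ coming from $a=\tanh 2t$, then produces the bound on the Taylor coefficients $c_\alpha$ of $Bf$; the identity $(f,\Phi_\alpha)=(2^{|\alpha|}\alpha!\pi^{n/2})^{1/2}c_\alpha$ translates it to the claimed Hermite coefficient bound, the bounded prefactor $e^{(d+n/2)t}$ being absorbed into the constant $C$ exactly in the same spirit as $e^{t/2}$ is absorbed in the preceding theorem. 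The main obstacle is the transfer step in the second paragraph: one must verify carefully that the one-variable Phragm\'en--Lindel\"of argument really yields a symmetric bound on $Bf(t\nu)$ valid uniformly in $a$ and that the polynomial factor $t^d P(\nu)$ does not mask any growth in $h$; equally delicate is the preliminary reduction to a single $O(n)$-type, which must be shown to preserve the Beurling condition (this reduces to the $O(n)$-invariance of the kernel $e^{a|x\cdot y|}$ together with boundedness of $O(n)$-averaging on pointwise majorants, but has to be tracked carefully to keep the dependence on $a$ under control).
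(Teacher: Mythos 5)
Your overall architecture --- reduce via $O(n)$-finiteness to components $P(x)g(|x|)$, use the Hecke--Bochner structure $Bf(z)=P(z)h(z\cdot z)$, restrict to a complex line to run Vemuri's one-dimensional Phragm\'en--Lindel\"of argument, then do Cauchy estimates --- is the right one and is essentially what the paper (via Section 5 of \cite{RT1}) does. But your preliminary reduction has a genuine gap. You claim each isotypic component $f_\delta$ again satisfies the Beurling condition because the kernel $e^{a|x\cdot y|}$ is $O(n)$-invariant. The kernel is invariant only under the \emph{diagonal} action $(x,y)\mapsto(kx,ky)$, whereas after taking absolute values the projections of $f$ and of $\hat f$ force two \emph{independent} averages: $|f_\delta(x)|\,|\widehat{f_\delta}(y)|\le C\int_{O(n)}\int_{O(n)}|f(kx)|\,|\hat f(k'y)|\,dk\,dk'$, and changing variables produces the averaged kernel
\begin{equation*}
\int_{O(n)}e^{a|x\cdot gy|}\,dg \;\asymp\; \bigl(1+a|x|\,|y|\bigr)^{-(n-1)/2}\,e^{a|x|\,|y|},
\end{equation*}
which is not controlled by $e^{a|x\cdot y|}$ (take $x\perp y$ with $|x|\,|y|$ large). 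So finiteness of $K_a(f)$ does not pass to $K_a(f_\delta)$ by this argument, and this is precisely the step on which the rest of your proof rests.

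The fix --- and the route the paper actually takes --- is to do the separation on the Fock-space side rather than on $f$ itself. Derive the two estimates on $|Bf(z)|$ for the \emph{full} $f$ first (as you do); their right-hand sides depend only on $|\mathrm{Re}\,z|$ and $|\mathrm{Im}\,z|$ and are therefore invariant under the diagonal $O(n)$-action $z\mapsto kz$ on $\C^n$. Since $B$ intertwines the $O(n)$-actions, $Bf_\delta(z)=d_\delta\int_{O(n)}\overline{\chi_\delta(k)}\,Bf(k^{-1}z)\,dk$ inherits the \emph{same} Gaussian bounds up to the harmless constant $d_\delta\sup|\chi_\delta|$; equivalently, restrict $Bf$ to lines $z=\zeta\omega$, $\omega\in S^{n-1}$, and use orthogonality of the spherical harmonics $P_j$ on the sphere to bound each $\zeta^{d_j}h_j(\zeta^2)$ by $\sup_\omega|Bf(\zeta\omega)|$. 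Either version makes your single-line restriction legitimate (a single $\nu$ does not separate the sum $\sum_j t^{d_j}P_j(\nu)h_j(t^2)$, which is another reason your reduction must precede it). Two smaller points: the crude bound $|P(z)|\le C_P|z|^d$ injects a polynomial factor $|\alpha|^{d/2}$ into the Cauchy estimate that is not present in the stated conclusion (the reference handles this by estimating the Taylor coefficients of the one-variable function $\zeta^{d}h(\zeta^2)$ directly and matching them to the Laguerre structure of $\Phi_\alpha$); and the prefactor $e^{(d+n/2)t}$ you propose to absorb into $C$ is unbounded as $a\to1$, so it cannot simply be absorbed if $C$ is to be independent of $a$ --- this bookkeeping needs the same care as the explicit $e^{t/2}$ retained in the preceding theorem.
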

\begin{proof}
We have already seen in the previous theorem that the given assumptions on $f$ lead to the following estimates on $Bf$
\begin{eqnarray*}
|Bf(z)| &\leq& \left( \pi^{-n} K_a(f) \right)^{1/2} \exp \left( \frac{1}{4} \left(|y|^2 + \frac{1-a}{1+a}|x|^2 \right) \right), \\
|Bf(z)| &\leq& \left( \pi^{-n} K_a(f) \right)^{1/2} \exp \left( \frac{1}{4} \left(|x|^2 + \frac{1-a}{1+a}|y|^2 \right) \right).
\end{eqnarray*}
With these estimates, by considering a vector valued Bargmann transform, we have proved the result in Section 5 of \cite{RT1}.
\end{proof}

Finally we prove that any function $f$ satisfying Beurling's condition is an entire vector for the Schr\"odinger representation of the Heisenberg group $\H^n$. The space of all analytic vectors for the Schr\"odinger representation $\pi$ of the Heisenberg group is denoted by $(L^2(\R^n))^\omega$. Recall that $f \in L^2(\R^n)$ is said to be an analytic vector for $\pi$ if the map $(x,u) \rightarrow \pi(x,u)f $ is real analytic from $\R^{2n} $ into $ L^2(\R^n)$. It is known that $f \in (L^2(\R^n))^\omega$ if and only if the map $(x,u) \rightarrow \pi(x,u)f $ extends as a holomorphic function to a tube domain in $ \C^{2n} $ such that
\begin{equation} \label{entire-vector-eq1}
\sup_{\{(y,v) : |y|^2 + |v|^2 < t^2\}} \int_{\R^n} |\pi(x+iy,u+iv)f(\xi)|^2 \, d\xi < \infty
\end{equation}
for some $ t> 0.$ If the above is true, then we have the formula (see Theorem 1.1 in \cite{Th1})
$$ \int_{\R^n} \int_{K}|\pi(k \cdot (x+iy,u+iv)) f(\xi)|^2 \, dk \, d\xi $$
$$ = e^{(u\cdot y-v\cdot x)}\sum_{k=0}^\infty \|P_kf\|_2^2 \frac{k!(n-1)!} {(k+n-1)!} \varphi_k^{n-1}(2iy,2iv)$$
where $K$ is the compact group $Sp(n,\R) \cap O(2n,\R), $ and $ P_k $ are the orthogonal projections associated to the eigenspaces of the Hermite operator $ H $ and $ \varphi_k^{n-1}(y,v) = L_k^{n-1}(\frac{1}{2}(|y|^2+|v|^2)) e^{-\frac{1}{4}(|y|^2+|v|^2)} $ are the Laguerre functions of type $ (n-1).$ Here $Sp(n,\R)$ stands for the symplectic group of $2n$ by $2n$ matrices with real entries that preserve the symplectic form $[(x,u),(y,v)] = (u \cdot y - v \cdot x)$ on $R^{2n}$ and have determinant one. The asymptotic behavior of the Laguerre functions $ \varphi_k^{n-1} $ in the complex domain is well known. From Theorem 8.22.3 in Szego \cite{Sz} we see that $\varphi_k^{n-1} (2iy,2iv) $ behaves like $e^{2\sqrt{(2k+n)}(|y|^2+|v|^2)^{1/2}}.$ From the above formula it follows that $ f \in (L^2(\R^n))^\omega $ if and only if $f$ is in the image of $L^2(\R^n)$ under the Hermite-Poisson semigroup $e^{-t\sqrt{H}}$ for some $t > 0.$ A function $f \in L^2(\R^n)$ is said to be an entire vector for $\pi$ if the map $(x,u) \rightarrow \pi(x,u)f $ extends to $\C^{2n}$ as an entire function and satisfies (\ref{entire-vector-eq1}) for every $t>0$. It follows then from the above that $f$ is an entire vector for $\pi$ if and only if $f$ is in the image of $L^2(\R^n)$ under $e^{-t\sqrt{H}}$ for every $t > 0.$ With this we are now ready to prove the following theorem:

\begin{thm}
Let $f \in L^2(\R^n)$ satisfy the subcritical Beurling's condition
\begin{eqnarray*}
K_a(f) = \int_{\R^n}\int_{\R^n} |f(x)| |\hat{f}(y)| e^{a |x \cdot y|} \, dx \, dy < \infty
\end{eqnarray*}
for some $  a > 0.$ Then $f$ is an entire vector for the Schr\"odinger representation of the Heisenberg group $\H^n$.
\end{thm}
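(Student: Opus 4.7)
The plan is to use the characterisation recalled in the paragraph just above the theorem: $f\in L^2(\R^n)$ is an entire vector for $\pi$ if and only if $f$ lies in the image of $e^{-t\sqrt{H}}$ for every $t>0$, equivalently
$$\sum_\alpha |(f,\Phi_\alpha)|^2\,e^{2t\sqrt{2|\alpha|+n}}<\infty\quad\text{for every }t>0.$$
Since $\sqrt{2|\alpha|+n}=o(|\alpha|)$, any geometric decay $|(f,\Phi_\alpha)|^2\leq C\rho^{|\alpha|}$ with some $\rho<1$ is already enough, and via the Bargmann isometry this is equivalent to showing that $Bf$ belongs to a Fock space with Gaussian weight $e^{-\sigma|z|^2/2}$ for some $\sigma<1$, i.e., strictly heavier than the canonical Fock weight. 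Moreover the case $a\geq 1$ is trivial: Theorem~\ref{BDJ} with $N=0$ forces $f=0$, so we may assume $0<a<1$.

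I would first extract from $K_a(f)<\infty$ exactly the Bargmann bound used in the proof of Theorem~\ref{*}: completing the square in the integral defining $Bf\cdot B\hat f$ gives
$$|Bf(z)\,B\hat f(z)|\leq \pi^{-n}\,K_a(f)\,\exp\!\Bigl(\tfrac12|y|^2+\tfrac12\tfrac{1-a}{1+a}|x|^2\Bigr),\quad z=x+iy.$$
Since $K_a(\hat f)=K_a(f)$, applying the same inequality to $\hat f$ yields the companion estimate with the roles of $x$ and $y$ interchanged. Using $B\hat f(z)=Bf(-iz)$ and taking the geometric mean of the two estimates produces the rotation-symmetric product bound
$$|Bf(z)\,Bf(-iz)|\leq C_a\,e^{|z|^2/(2(1+a))},$$
whose exponent $1/(2(1+a))$ is strictly smaller than the critical Fock exponent $1/2$.

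The next step is to convert this product bound into a bound on $Bf$ alone. Since $u(z):=\log|Bf(z)|$ is subharmonic and satisfies $u(z)+u(-iz)\leq|z|^2/(2(1+a))+\log C$, averaging over circles $|z|=r$ and using the rotation-invariance of the circular mean yields
$$\phi(r):=\tfrac1{2\pi}\int_0^{2\pi}u(re^{i\theta})\,d\theta\leq \frac{r^2}{4(1+a)}+C'.$$
Combined with the canonical pointwise Fock bound $|Bf(z)|\leq \|f\|_2\,e^{|z|^2/4}$, a Phragm\'{e}n--Lindel\"{o}f/subharmonic-averaging argument upgrades this angular estimate to the $L^2$-level statement
$\int |Bf(z)|^2\,e^{-(1-\varepsilon)|z|^2/2}\,dz<\infty$
for some $\varepsilon=\varepsilon(a)>0$. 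Via the Bargmann isometry this is exactly the geometric decay $|(f,\Phi_\alpha)|^2\lesssim (1-\varepsilon)^{|\alpha|}$ needed to make the entire-vector series converge for every $t>0$.

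The main obstacle is this last step: passing from the product bound on $Bf(z)\,Bf(-iz)$ to an individual bound on $Bf$ strong enough to embed it in a smaller Fock space. For eigenfunctions of the Fourier transform, treated in the two preceding theorems, this step is automatic because $|Bf|=|B\hat f|$; in the general case one must rely on a genuinely higher-dimensional subharmonic-averaging or Phragm\'{e}n--Lindel\"{o}f argument on $\log|Bf|$, which I expect to be the technical heart of the proof.
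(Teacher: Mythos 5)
Your reduction of the entire-vector property to the decay $\sum_\alpha |(f,\Phi_\alpha)|^2 e^{2t\sqrt{2|\alpha|+n}}<\infty$ for every $t>0$ is correct, and your product bound $|Bf(z)Bf(-iz)|\leq C_a e^{|z|^2/(2(1+a))}$ is a legitimate consequence of $K_a(f)<\infty$. But the strategy of deducing \emph{geometric} decay $|(f,\Phi_\alpha)|^2\lesssim(1-\varepsilon)^{|\alpha|}$ is aiming at something both unnecessary and unattainable. With $\varepsilon=\varepsilon(a)$ depending only on $a$, the claim is false: the dilated Gaussians $e^{-\lambda|x|^2/2}$ satisfy $K_a<\infty$ for every fixed $a<1$ and every $\lambda>0$, yet their Hermite coefficients decay like $\bigl(\tfrac{1-\lambda}{1+\lambda}\bigr)^{|\alpha|}$ with rate tending to $1$ as $\lambda\to 0$ --- this is exactly the content of the paper's Theorem~\ref{Beurling-Vemuri}. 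Even the $f$-dependent version of geometric decay is explicitly stated in the introduction to be an open problem, so any short argument for it should be viewed with suspicion.

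The suspicion is confirmed at the step you yourself flag as the technical heart. Averaging $u(z)+u(-iz)\leq |z|^2/(2(1+a))+\log C$ over spheres does give $\phi(r)\leq \tfrac{r^2}{4(1+a)}+C'$, but a bound on the spherical \emph{mean} of $\log|Bf|$ gives no upper bound on $\sup_{|z|=r}|Bf|$ or on $\int_{|z|=r}|Bf|^2$: Jensen's inequality runs the wrong way, and $\log|Bf|$ can be arbitrarily negative near the zeros of $Bf$, letting $|Bf|$ be nearly maximal on most of the sphere while the mean of its logarithm stays small. More structurally, the product $G(z)G(-iz)$ is invariant under multiplying $G$ by $e^{cz^2}$ (since $(-iz)^2=-z^2$), so the product bound alone cannot pin down the individual Gaussian type of $Bf$; this is precisely why the paper can split the product only for eigenfunctions of the Fourier transform, where $|B\hat f|=|Bf|$. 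The paper's actual proof avoids this entirely: it uses H\"ormander's spherical-cap argument together with the real-analyticity of $f$ and $\hat f$ to show that $\int|f(\xi)|e^{t|\xi|}\,d\xi$ and $\int|\hat f(\eta)|e^{t|\eta|}\,d\eta$ are finite for \emph{every} $t>0$, and then a Cauchy-estimate argument on the Taylor coefficients of $B f$ converts these first-order exponential moments into the decay $|(f,\Phi_\alpha)|\lesssim_t e^{-ct\sqrt{2|\alpha|+n}}$ for every $t$ --- exactly the square-root-exponential rate the entire-vector criterion requires, and much weaker than the geometric decay you were trying to force.
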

\begin{proof}
We have noticed that $f$ is an entire vector for $\pi$ if and only if it is in the image of $L^2(\R^n)$ under $e^{-t\sqrt{H}}$ for every $t > 0.$ Therefore, the theorem will follow once we prove that
\begin{eqnarray*}
|(f, \Phi_\alpha)| \leq C_{t} e^{-t(2|\alpha| + n)^{1/2}}
\end{eqnarray*}
for all $\alpha \in \N^n$ and for all $t > 0.$ For $a \geq 1,$ we already know that $f = 0.$ So we take $0<a<1.$ In this case there are infinitely many linearly independent functions satisfying the Beurling's condition, e.g. all Hermite functions. Without loss of generality let us assume $ f $ is nontrivial. The idea of the proof of this theorem comes from the proof of Beurling's theorem due to H\"{o}rmander \cite{H}. Consider the non-negative functions $A$ and $B$ on $\R^n$ defined by
\begin{eqnarray*}
A(\xi) = \int_{\R^n} |\hat{f}(\eta)| e^{a |\xi \cdot \eta|} \, d\eta,~ B(\eta) = \int_{\R^n} |f(\xi)| e^{a |\xi \cdot \eta|} \, d\xi.
\end{eqnarray*}
Then by the hypothesis of the theorem
\begin{eqnarray*}
\int_{\R^n} |f(\xi)| A(\xi) \, d\xi = \int_{\R^n} |\hat{f}(\eta)| B(\eta) \, d\eta = K_a(f) < \infty.
\end{eqnarray*}
Now repeating the arguments of the first part of Lemma \ref{***}, one can verify that there exists some $\delta' > 0$ such that
$$ \int_{\R^n} |f(\xi)| e^{\delta'|\xi|} \, d\xi < \infty,~
\int_{\R^n} |\hat{f}(\eta)| e^{\delta'|\eta|} \, d\eta < \infty.$$
As a result of this, both $f$ and $\hat{f}$ extend as holomorphic functions to a domain $\Omega^n$ in $\C^n$ containing $\R^n,$ where $\Omega$ is a horizontal strip in $\C$ containing $\R.$

Now we will show that both $A$ and $B$ grow faster than any exponential function. For this first notice that $S^{n-1}$ can be written as a union of finitely many proper open connected spherical caps $U_j,~ j=1,2,...,N$ such that for each $j$,
\begin{eqnarray*}
\left| \omega_1 \cdot \omega_2 \right| \geq \frac{1}{2},~~~~~~ \textup{ for all } \omega_1, \omega_2 \in U_j.
\end{eqnarray*}
Now choose $k$ for which $\xi' \in U_k$. Then
\begin{eqnarray*}
A(r\xi') &\geq& \int_0^\infty \int_{U_k} |\hat{f}(s \omega)| e^{\frac{a}{2} rs} s^{n-1} \, ds \, d\omega.
\end{eqnarray*}
Since $S^{n-1}$ is the union of $U_j$'s, it follows that
\begin{eqnarray*}
A(r\xi') &\geq& \min_j \int_0^\infty \int_{U_j} |\hat{f}(s \omega)| e^{\frac{a}{2} rs} s^{n-1} \, ds \, d\omega \\
&\geq& \min_j \int_{s \geq \frac{2t}{a}} \int_{U_j} |\hat{f}(s \omega)| e^{\frac{a}{2} rs} s^{n-1} \, ds \, d\omega \\
&\geq& e^{tr} \min_j \int_{s \geq \frac{2t}{a}} \int_{U_j} |\hat{f}(s \omega)| s^{n-1} \, ds \, d\omega.
\end{eqnarray*}
The above is true for every $t>0$.
Since $\hat{f}$ is the restriction to $\R^n$ of a non-zero complex analytic function in a domain $\Omega^n \subset \C^n$ containing $\R^n$, it follows that $\hat{f}$ is a real analytic function on $\mathbb{R}^n$. Thus by uniqueness theorem of real analytic functions, it follows immediately that in any given domain of $\R^n,$ one can find a smaller domain on which $\hat{f}$ is away from zero. Thus for any $j \in \{1,2,....,n\},$
\begin{eqnarray*}
\int_{s \geq \frac{2t}{a}} \int_{U_j} |\hat{f}(s \omega)| s^{n-1} \, ds \, d\omega >0.
\end{eqnarray*}  
But then
\begin{eqnarray*}
A(r\xi') \geq C(t) e^{tr}
\end{eqnarray*}
where
\begin{eqnarray*}
C(t) = \min_j \int_{s \geq \frac{2t}{a}} \int_{U_j} |\hat{f}(s \omega)| s^{n-1} \, ds \, d\omega > 0.
\end{eqnarray*}

Similarly $B$ grows faster than any exponential function. In other words for every $t>0$
\begin{eqnarray*}
\int_{\R^n} |f(\xi)| e^{t|\xi|} \, d\xi < \infty,~~~ \int_{\R^n} |\hat{f}(\eta)| e^{t|\eta|} \, d\eta < \infty.
\end{eqnarray*}
%
%
With these estimates the proof will be completed once we have the following theorem.
\end{proof}

\begin{thm}
Let $\psi$ be a measurable function on $\R^n$ satisfying the estimates
\begin{eqnarray*}
|\psi(\xi)| \leq g(\xi) e^{-t|\xi|},~~~ |\hat{\psi}(\eta)| \leq h(\eta) e^{-t|\eta|}
\end{eqnarray*}
for some $t > 0$ and integrable functions $g$ and $h.$ Then there exists a positive constant $C$ (independent of $a$) such that for all $\alpha \in \N^n$,
\begin{eqnarray*}
|(\psi, \Phi_\alpha)| \leq C \prod_{j=1}^n (2\alpha_j+1)^{1/4}~ e^{-\frac{t}{\sqrt{2n}}(2\alpha_j+1)^{1/2}}
\end{eqnarray*}
where $t$ is determined by the condition $a = \tanh(2t).$
\end{thm}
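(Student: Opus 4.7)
The plan is to reduce the desired bound on the Hermite coefficients of $\psi$ to a Taylor-coefficient bound for the Bargmann transform $B\psi$, following the strategy used in the preceding proofs. Recall that if $B\psi(z)=\sum_\alpha c_\alpha z^\alpha$ then $(\psi,\Phi_\alpha)=(2^{|\alpha|}\alpha!\,\pi^{n/2})^{1/2}c_\alpha$, so it suffices to bound $|c_\alpha|$ by a suitable application of Cauchy's integral formula on a well-chosen polydisc.

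First I would derive two pointwise estimates on $|B\psi(z)|$ for $z=x+iy\in\C^n$. Substituting $|\psi(\xi)|\le g(\xi)e^{-t|\xi|}$ into the defining integral of $B\psi$, completing the square $-\frac{1}{2}|\xi|^2+x\cdot\xi=-\frac{1}{2}|\xi-x|^2+\frac{1}{2}|x|^2$, using the reverse triangle inequality $-|\xi|\le-|x|+|\xi-x|$, and the elementary bound $-\frac{1}{2}s^2+ts\le\frac{1}{2}t^2$, one obtains
$$|B\psi(z)|\le A\,e^{(|x|^2+|y|^2)/4}\,e^{-t|x|}$$
with $A=\pi^{-n/2}e^{t^2/2}\|g\|_1$. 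The intertwining identity $B\psi(w)=B\hat\psi(iw)$ (equivalent to $Bg(-iz)=B\hat g(z)$) interchanges the roles of $x$ and $y$, so the same computation with $\hat\psi$ and $h$ in place of $\psi$ and $g$ yields the companion bound
$$|B\psi(z)|\le A'\,e^{(|x|^2+|y|^2)/4}\,e^{-t|y|}.$$
Taking the minimum of these two gives $|B\psi(z)|\le\tilde{A}\,e^{(|x|^2+|y|^2)/4}\,e^{-t\max(|x|,|y|)}$ with $\tilde A=\max(A,A')$.

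The crucial next step is an elementary dimensional inequality that converts this ``global'' decay into decay that splits over the coordinates. Since $2\max(|x|,|y|)^2\ge|x|^2+|y|^2=\sum_{j}|z_j|^2$ and $\sum_{j}|z_j|\le\sqrt{n}\bigl(\sum_{j}|z_j|^2\bigr)^{1/2}$ by Cauchy--Schwarz,
$$\max(|x|,|y|)\ge\frac{1}{\sqrt{2n}}\sum_{j=1}^{n}|z_j|,$$
so
$$|B\psi(z)|\le\tilde{A}\exp\!\left(\frac{1}{4}\sum_{j=1}^{n}|z_j|^2-\frac{t}{\sqrt{2n}}\sum_{j=1}^{n}|z_j|\right).$$
On the polydisc $|z_j|=r_j$ the right-hand side factors as a product over $j$, and Cauchy's inequality gives
$$|c_\alpha|\le\tilde{A}\prod_{j=1}^{n}r_j^{-\alpha_j}\exp\!\left(\frac{r_j^2}{4}-\frac{t}{\sqrt{2n}}r_j\right).$$
Since the radii decouple, I would choose $r_j=\sqrt{2\alpha_j+1}$, which is asymptotically optimal and is convenient because $r_j^2/4=(2\alpha_j+1)/4$ and $r_j^{\alpha_j}=(2\alpha_j+1)^{\alpha_j/2}$.

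To finish, multiply by $(2^{|\alpha|}\alpha!\,\pi^{n/2})^{1/2}$ and invoke Stirling's formula in the form $(2^{\alpha_j}\alpha_j!)^{1/2}\le C(2\alpha_j+1)^{1/4}(2\alpha_j+1)^{\alpha_j/2}e^{-\alpha_j/2}$, valid uniformly for $\alpha_j\ge 0$. The factors $(2\alpha_j+1)^{\alpha_j/2}$ cancel exactly, the purely exponential prefactors combine since $-|\alpha|/2+(2|\alpha|+n)/4=n/4$ is independent of $\alpha$, and what remains is precisely
$$C''\prod_{j=1}^{n}(2\alpha_j+1)^{1/4}\,e^{-(t/\sqrt{2n})(2\alpha_j+1)^{1/2}},$$
the claimed bound. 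The only genuinely delicate point in the argument is the dimensional inequality that produces the factor $1/\sqrt{2n}$; once that is in place, everything else is standard Cauchy-formula and Stirling bookkeeping.
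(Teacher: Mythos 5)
Your proposal is correct and follows essentially the same route as the paper: the two Bargmann-transform bounds obtained by completing the square for $\psi$ and for $\hat\psi$, Cauchy's estimate on the polydisc with $r_j=(2\alpha_j+1)^{1/2}$, and Stirling. The only (valid) variation is that you merge the two bounds pointwise via $\max(|x|,|y|)\geq\frac{1}{\sqrt{2n}}\sum_j|z_j|$ before applying the sup-version of Cauchy's inequality, whereas the paper first splits $|x|\geq\frac{1}{\sqrt n}\sum_j|x_j|$ coordinatewise and then extracts the same factor $e^{-\frac{t}{\sqrt{2n}}r_j}$ by cutting each circle of the Cauchy integral into arcs where $\cos\theta_j$ or $\sin\theta_j$ exceeds $1/\sqrt2$; both yield the identical constant $t/\sqrt{2n}$.
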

\begin{proof}
The theorem is proved by estimating the Taylor coefficients of the Bargmann transform of $\psi.$ An exact analogue of this theorem was proved in \cite{RT2} (Theorem 3.9) where $g$ and $h$ were assumed to be bounded. Nevertheless, we present the proof here for the convenience of the reader.
The given condition on $\psi$ implies that
\begin{eqnarray*}
|B\psi(x+iy)| \leq \pi^{-n/2} e^{-\frac{1}{4}(|x|^2 - |y|^2)} \int_{\R^n} g(\xi) e^{-t|\xi|} e^{- \frac{1}{2}|\xi|^2} e^{|x||\xi|} \, d\xi.
\end{eqnarray*}
By completing the squares, one can verify that the above integral is dominated by $ e^{\frac{1}{2}(|x|-t)^2} \| g\|_1$ leading to
\begin{eqnarray*}
|B\psi(x+iy)| &\leq& C_{n,t}~ e^{\frac{1}{4}(|x|^2+|y|^2)} e^{-t|x|} \\
&\leq& C_{n,t}~ \prod_{j=1}^n e^{\frac{1}{4}(x_j^2+y_j^2)} e^{-\frac{t}{\sqrt{n}}|x_j|}.
\end{eqnarray*}
Similarly, the given condition on $\hat{\psi}$ and the relation $B\psi(-iz) = B\hat{\psi}(z)$ gives the other estimate for $B\psi$, namely,
$$ |B\psi(x+iy)| \leq C_{n,t}~ \prod_{j=1}^n e^{\frac{1}{4}(x_j^2+y_j^2)} e^{-\frac{t}{\sqrt{n}}|y_j|}.$$
Using the Cauchy integral formula, we get for every $r_j > 0,~ j=1, 2, \ldots, n$
\begin{eqnarray*}
|c_\alpha| &\leq& \frac{1}{(2\pi)^n} \int_0^{2\pi} \ldots \int_0^{2\pi} \frac {|B\psi(r_1 e^{i\theta_1}, \ldots, r_n e^{i\theta_n})|} {r_1^{\alpha_1} \ldots r_n^{\alpha_n}} \, d\theta_1 \ldots \, d\theta_n \\
&\leq& \frac{4 C_{n,t}}{(2\pi)^n} \prod_{j=1}^{n} r_j^{-\alpha_j} e^{\frac{1}{4}r_j^2} \left( \int_0^{\frac{\pi}{4}} e^{-\frac{t}{\sqrt{n}} r_j \cos \theta_j} \, d\theta_j + \int_{\frac{\pi}{4}}^{\frac{\pi}{2}} e^{-\frac{t}{\sqrt{n}} r_j \sin \theta_j} \, d\theta_j \right) \\
&\leq& \tilde{C}_{n,t} \prod_{j=1}^{n} r_j^{-\alpha_j} e^{\frac{1}{4}r_j^2} e^{-\frac{t}{\sqrt{2n}}r_j}.
\end{eqnarray*}
Since the above is true for every $r_j > 0,~ j=1, 2, \ldots, n$, we can take in particular $r_j = (2\alpha_j+1)^{1/2}$ to get
\begin{eqnarray*}
|c_\alpha| &\leq& \tilde{C}_{n,t} \prod_{j=1}^n (2\alpha_j+1)^{-\alpha_j/2}~
e^{\frac{1}{4}(2\alpha_j+1)} e^{-\frac{t}{\sqrt{2n}} (2\alpha_j+1)^{1/2}}.
\end{eqnarray*}
Thus
\begin{eqnarray*}
|(\psi,\Phi_\alpha)| &\leq& \tilde{C}_{n,t} \left( 2^\alpha \alpha! \pi^{n/2} \right)^{1/2} \prod_{j=1}^n (2\alpha_j+1)^{-\alpha_j/2}~ e^{\frac{1}{4}(2\alpha_j+1)} e^{-\frac{t}{\sqrt{2n}}(2\alpha_j+1)^{1/2}} \\
&\sim& \tilde{C}_{n,t} \prod_{j=1}^n (2\alpha_j+1)^{1/4}~ e^{-\frac{t}{\sqrt{2n}}(2\alpha_j+1)^{1/2}}
\end{eqnarray*}
where the last estimate is obtained using the Stirling's formula $\Gamma(\lambda+1) \sim \lambda^{\lambda+1/2}~ e^{-\lambda}.$
\end{proof}
  
\begin{center}
{\bf Acknowledgments}

\end{center}
This work is supported in part by grant from UGC Centre for advanced Study. The work of first author is supported in part by SRF from CSIR, India. The work of second author is supported in part by J.C. Bose National Fellowship from DST, India.

\end{document}